\theoremstyle{plain}
\newtheorem{theorem}{Theorem}[section]
\newtheorem{lemma}[theorem]{Lemma}
\newtheorem{definition}[theorem]{Definition}
\newtheorem{problem}[theorem]{Problem}
\theoremstyle{definition}
\newtheorem{notation}[theorem]{Notation}
\newtheorem{example}[theorem]{Example}
\newtheorem{remark}[theorem]{Remark}
\numberwithin{equation}{section}
\begin{document}
\title[2-variable subnormal completion problem]{A new approach to the $2$%
-variable \\
subnormal completion problem}
\author{Ra\'{u}l E. Curto}
\address[Ra\'{u}l E. Curto]{Department of Mathematics \\
The University of Iowa\\
Iowa City, Iowa 52242}
\email[Ra\'{u}l E. Curto]{rcurto@math.uiowa.edu}
\urladdr{http://www.math.uiowa.edu/\symbol{126}rcurto/}
\thanks{The first named author was partially supported by NSF Grants
DMS-0400741 and DMS-0801168.}

\author{Sang Hoon Lee}
\address[Sang Hoon Lee]{Department of Mathematics\\
Chungnam National University\\
Daejeon, 305-764, Korea }
\email[Sang Hoon Lee]{shlee@math.cnu.ac.kr}
\urladdr{}
\thanks{The second named author was partially supported by a National 
Research Foundation of Korea Grant funded by the Korean 
Government (2009-0085279).}

\author{Jasang Yoon}
\address[Jasang Yoon]{Department of Mathematics\\
The University of Texas-Pan American\\
Edinburg, Texas 78539}
\email[Jasang Yoon]{yoonj@utpa.edu}
\urladdr{http://www.math.utpa.edu/\symbol{126}yoonj/}
\thanks{The third named author was partially supported by a Faculty Research
Council Grant at The University of Texas-Pan American.}
\subjclass[2000]{Primary 47B20, 47B37, 47A13, 28A50; Secondary 44A60, 47-04,
47A20}
\keywords{subnormal completion problem, subnormal pair, $2$-variable
weighted shift, moment problems, $k$-hyponormal pairs}
\dedicatory{}
\thanks{}

\begin{abstract}
We study the Subnormal Completion Problem (SCP) for $2$-variable weighted
shifts. \ We use tools and techniques from the theory of truncated moment
problems to give a general strategy to solve SCP. \ We then show that when
all quadratic moments are known (equivalently, when the initial segment of
weights consists of five independent data points), the natural necessary
conditions for the existence of a subnormal completion are also sufficient.
\ To calculate explicitly the associated Berger measure, we compute the
algebraic variety of the associated truncated moment problem; it turns out
that this algebraic variety is precisely the support of the Berger measure
of the subnormal completion.
\end{abstract}

\maketitle

\section{Introduction}

We present a new approach to the Subnormal Completion Problem (SCP) for $2$%
-variable weighted shifts. \ It employs the localizing matrices introduced
and studied in \cite{tcmp4} in the context of the truncated $K$-moment
problem ($K$-TMP). \ This helps identify potential candidates for weights,
and makes the problem more accessible.

We first give a general strategy to solve SCP, and we later apply it to
solve the SCP with \textit{quadratic data}. \ That is, given an initial set
of weights $\Omega _{1}$ consisting of five independent data points ($\alpha
_{00}$, $\beta _{00}$, $\alpha _{10}$, $\alpha _{01}$ and $\beta _{01}$), we
prove that the natural necessary condition for the existence of a subnormal
completion is also sufficient. \ Concretely, associated to the five given
weights is a $3\times 3$ \textit{moment matrix} $M(\Omega _{1})$, whose
positive semi-definiteness is a necessary condition for the existence of a
subnormal completion; in symbols, $M(\Omega _{1}):=(\gamma _{\mathbf{u}+%
\mathbf{v}})_{\mathbf{u},\mathbf{v}\in \mathbb{Z}_{+}^{2},\left| \mathbf{u}%
\right| ,\left| \mathbf{v}\right| \leq 1}$, where $\gamma _{00}:=1$, $\gamma
_{10}:=\alpha _{00}^{2}$, $\gamma _{01}:=\beta _{00}^{2}$, $\gamma
_{20}:=\alpha _{10}^{2}\alpha _{00}^{2}$, $\gamma _{11}:=\alpha
_{01}^{2}\beta _{00}^{2}$, and $\gamma _{02}:=\beta _{01}^{2}\beta _{00}^{2}$%
. \ We prove that the necessary condition $M(\Omega _{1})\geq 0$ turns out
to be sufficient for the existence of a representing measure $\mu $
supported in $\mathbb{R}_{+}^{2}$ and satisfying the property $\operatorname{supp}%
\;\mu \cap (0,+\infty )^{2}\neq \varnothing $; the measure $\mu $ then gives
rise to a subnormal completion of $\Omega _{1}$. \ Once we know that a
representing measure exists, we use techniques from the theory of truncated
moment problems to find a concrete expression for it. \ 

As a first step, we build \textit{new} weights $\alpha _{20}$, $\alpha _{11}$%
, $\alpha _{02}$ and $\beta _{02}$, and we use them to construct the \textit{%
localizing matrices} $M_{x}(\hat{\Omega}_{3})$ and $M_{y}(\hat{\Omega}_{3})$%
, where $\hat{\Omega}_{3}$ is a proposed extension of $\Omega _{1}$. \ The
positive semi-definiteness of $M(\Omega _{1})$ is then used to establish
that the localizing matrices $M_{x}(\hat{\Omega}_{3})$ and $M_{y}(\hat{\Omega%
}_{3})$ can be made positive semi-definite for suitable choices of the new
weights $\alpha _{20}$, $\alpha _{11}$, $\alpha _{02}$ and $\beta _{02}$. \
That is, the condition $M(\Omega _{1})\geq 0$ triggers the two conditions $%
M_{x}(\hat{\Omega}_{3})\geq 0$ and $M_{y}(\hat{\Omega}_{3})\geq 0$ for
appropriate values of $\alpha _{20}$, $\alpha _{11}$, $\alpha _{02}$ and $%
\beta _{02}$. \ Once that happens, we prove that a \textit{flat} (i.e.,
rank-preserving) extension $M(\hat{\Omega}_{3})$ of $M(\Omega _{1})$ exists,
thereby giving rise to a unique representing measure $\mu $ for $\hat{\Omega}%
_{3}$, which is the Berger measure of the subnormal completion. \ The
explicit form of $\mu $ can be obtained by first determining the support of $%
\mu $, which agrees with the algebraic variety of $\hat{\Omega}_{3}$.

In one variable, SCP was stated and solved in \cite{RGWSI}: \ 

\begin{problem}
(One-Variable Subnormal Completion Problem) \ Given $m\geq 0$ and a finite
collection of positive numbers $\Omega _{m}\equiv \{\alpha _{k}\}_{k=0}^{m}$%
, find necessary and sufficient conditions on $\Omega _{m}$ to guarantee the
existence of a subnormal weighted shift whose initial weights are given by $%
\Omega _{m}$.
\end{problem}

Since subnormality implies hyponormality, the condition $\alpha _{0}\leq
\alpha _{1}\leq \alpha _{2}\leq \cdots \leq \alpha _{m}$ is obviously
necessary; moreover, it is easy to dispose of the case when $\alpha
_{k}=\alpha _{k+1}$ for some $0\leq k\leq m-1$, so one can always assume
that $\alpha _{0}<\alpha _{1}<\cdots <\alpha _{m}$. \ 

The cases $m=0$ and $m=1$ are straightforward, with canonical completions
given by $\alpha _{0},\alpha _{0},\alpha _{0},\cdots $ and $\alpha
_{0},\alpha _{1},\alpha _{1},\cdots $, respectively. \ The solution of the
case $m=2$ is based on the positivity of the moment matrix $H(1):=$ $\left( 
\begin{array}{cc}
\gamma _{0} & \gamma _{1} \\ 
\gamma _{1} & \gamma _{2}%
\end{array}%
\right) $ and of the localizing matrix $H_{x}(2):=\left( 
\begin{array}{cc}
\gamma _{1} & \gamma _{2} \\ 
\gamma _{2} & \gamma _{3}%
\end{array}%
\right) $; the explicit calculation of the subnormal completion requires 
\textit{recursively generated weighted shifts} \cite[Example 3.12]{RGWSI}. \ 

In the general ($1$-variable) case, the Subnormal Completion Criterion (SCC) %
\cite[Theorem 3.5]{RGWSI} states that a subnormal completion exists if and
only if an $\ell $-hyponormal completion exists, where $\ell :=[\frac{m}{2}%
]+1$.

\begin{theorem}
\label{onevariable}(One-Variable Subnormal Completion Criterion; cf. %
\cite[Theorem 3.5]{RGWSI}) \ Let $\Omega _{m}\equiv \{\alpha
_{k}\}_{k=0}^{m} $ be a finite collection of positive numbers, let $k:=[%
\frac{m+1}{2}]$ and $\ell :=[\frac{m}{2}]+1$, and let $H(k)\equiv H(\Omega
_{m}):=(\gamma _{i+j})_{0\leq i,j\leq k}$, $H_{x}(\ell -1)\equiv
H_{x}(\Omega _{m}):=(\gamma _{i+j+1})_{0\leq i,j\leq \ell -1}$ and $\mathbf{v%
}(i,j):=(\gamma _{i}\;\;\gamma _{i+1}\;\;\cdots \;\;\gamma _{i+j})^{T}$. \
The following statements are equivalent.\newline
(i) $\Omega _{m}$ admits a subnormal completion;\newline
(ii) $\Omega _{m}$ admits an $\ell $-hyponormal completion;\newline
(iii) $H(k)\geq 0$, $H_{x}(\ell -1)\geq 0$, and $\mathbf{v}(k+1,k)\in $ Ran $%
H(k)$ if $m$ is even ($\mathbf{v}(\ell +1,\ell -1)\in $ Ran $H_{x}(\ell -1)$
if $m$ is odd);\newline
(iv) $H(\Omega _{m})$ admits a positive flat (i.e., rank-preserving)
extension $H(\hat{\Omega}_{m+1})$ such that $H_{x}(\hat{\Omega}_{m+1})\geq 0$%
.
\end{theorem}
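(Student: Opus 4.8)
The plan is to establish the cycle of implications $(i)\Rightarrow(ii)\Rightarrow(iii)\Rightarrow(iv)\Rightarrow(i)$, after translating every statement into the language of the truncated Stieltjes moment problem for the sequence $\gamma\equiv\{\gamma_n\}$ attached to $\Omega_m$ by $\gamma_0:=1$ and $\gamma_{n+1}:=\alpha_n^2\gamma_n$. By Berger's theorem, a completion $\{\alpha_k\}_{k=0}^\infty$ of $\Omega_m$ is the weight sequence of a subnormal unilateral weighted shift exactly when $\{\gamma_n\}_{n\ge0}$ is a Stieltjes moment sequence; thus $(i)$ amounts to the solvability of the truncated Stieltjes moment problem with data $\gamma_0,\dots,\gamma_{m+1}$.

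$(i)\Rightarrow(ii)$ is immediate, since a subnormal weighted shift is $\ell$-hyponormal for every $\ell\ge1$. For $(ii)\Rightarrow(iii)$ I would invoke the moment-matrix description of $\ell$-hyponormality of a unilateral weighted shift $W_{\hat\alpha}$: it is $\ell$-hyponormal if and only if the Hankel matrices $(\gamma_{n+i+j})_{0\le i,j\le\ell}$ are positive semi-definite for all $n\ge0$. Restricting the $n=0$ and $n=1$ members of this family to the principal submatrices determined by the \emph{known} moments $\gamma_0,\dots,\gamma_{m+1}$ yields $H(k)\ge0$ and $H_x(\ell-1)\ge0$; and the threshold $\ell=[\frac{m}{2}]+1$ is precisely the one for which an $\ell$-hyponormal completion also makes a bordering of $H(k)$ by the column $\mathbf v(k+1,k)$ (if $m$ is even), respectively a bordering of $H_x(\ell-1)$ by $\mathbf v(\ell+1,\ell-1)$ (if $m$ is odd), available as a positive semi-definite matrix, which forces the Ran condition. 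This step is essentially bookkeeping once the moment-matrix criterion is in hand.

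The crux is $(iii)\Rightarrow(iv)$. Given $H(k)\ge0$, $H_x(\ell-1)\ge0$ and the Ran condition, the plan is to introduce a single new moment that turns the bordered Hankel matrix $H(\hat\Omega_{m+1})$ into a \emph{flat} (rank-preserving) extension of $H(\Omega_m)$, and then to verify that the associated localizing matrix $H_x(\hat\Omega_{m+1})$ stays positive semi-definite. When $m$ is even one solves $H(k)\varphi=\mathbf v(k+1,k)$ and uses $\varphi$ together with the Hankel structure to pin down the new corner moment so that $\rank H(k+1)=\rank H(k)$; when $m$ is odd one runs the analogous construction at the level of $H_x$, starting from $\mathbf v(\ell+1,\ell-1)\in\Ran H_x(\ell-1)$. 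I expect the main obstacle to lie exactly here: proving that the Hankel and the shifted-Hankel structures remain simultaneously consistent and positive under the extension, which calls for the machinery of \emph{recursively generated weighted shifts} from \cite{RGWSI}, a separate analysis of the even and odd cases, and some care with the degenerate situation in which consecutive $\alpha_j$'s coincide.

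Finally, $(iv)\Rightarrow(i)$ follows from the flat-extension theorem for truncated moment problems: a flat extension $H(\hat\Omega_{m+1})$ of $H(\Omega_m)$ with $H_x(\hat\Omega_{m+1})\ge0$ produces a finitely atomic representing measure $\mu$ for $\gamma$ supported in $[0,+\infty)$, and by Berger's theorem the unilateral weighted shift with Berger measure $\mu$ is subnormal with initial weights $\Omega_m$. This closes the cycle; the full details, including the explicit recursively generated subnormal completion, are carried out in \cite{RGWSI}.
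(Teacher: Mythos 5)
This theorem is stated in the paper only as a citation of \cite[Theorem 3.5]{RGWSI}; the paper supplies no proof of its own, so there is nothing internal to compare your argument against. Your outline is, in substance, the proof strategy of the cited reference: the reduction via Berger's theorem to a truncated Stieltjes moment problem, the moment-matrix (Hankel) characterization of $\ell$-hyponormality for weighted shifts (which, together with Smul'jan's lemma applied to the bordered Hankel matrices, does yield the positivity and range conditions in (iii)), and the closing implication (iv)$\Rightarrow$(i) via the flat-extension theorem with the localizing matrix $H_x$ forcing the support into $[0,+\infty)$. Your index bookkeeping is consistent with the statement: for $m$ even the new datum is the single moment $\gamma_{m+2}$ making $H(k+1)$ a flat extension of $H(k)$ while $H_x(\hat{\Omega}_{m+1})=H_x(\ell-1)$ is already covered by (iii), and for $m$ odd the roles of $H$ and $H_x$ swap, exactly as you indicate. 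The one place where your write-up is a plan rather than a proof is the crux (iii)$\Rightarrow$(iv): you correctly identify that one must verify that the recursively generated choice of the new moment keeps \emph{both} the Hankel and the shifted-Hankel matrices positive simultaneously (and handle the degenerate case of repeated weights), but you defer that verification to the machinery of recursively generated shifts in \cite{RGWSI} rather than carrying it out. Since the paper itself treats the entire theorem as a known result, this deferral is acceptable, but be aware that this step is where the real work of \cite{RGWSI} lies.
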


We now formulate the $2$-variable SCP:

\begin{problem}
\label{SCP}($2$-variable Subnormal Completion Problem) $\ $Given $m\geq 0$
and a finite collection of pairs of positive numbers $\Omega _{m}\equiv
\{(\alpha _{\mathbf{k}},\beta _{\mathbf{k}})\}_{|\mathbf{k}|\leq m}$
satisfying (\ref{commuting}) for all $|\mathbf{k}|\leq m$ (where $|\mathbf{k}%
|:=k_{1}+k_{2}$), find necessary and sufficient conditions to guarantee the
existence of a subnormal $2$-variable weighted shift whose initial weights
are given by $\Omega _{m}$.
\end{problem}

While the research in \cite{RGWSI} provided a complete solution to SCP in
one variable, the $2$-variable version requires new tools and techniques. \
At present, no general solution exists, and the problem appears to be quite
difficult. \ When $m=0$, in one variable the canonical subnormal completion
of $\alpha _{0}$ is the weighted shift $\alpha _{0},\alpha _{0},\alpha
_{0},\ldots $, with \textit{Berger measure} $\mu :=\delta _{\alpha _{0}^{2}}$%
; in two variables, the canonical subnormal completion of $(\alpha
_{00},\beta _{00})$ is the $2$-variable weighted with weight sequences $%
\alpha _{ij}:=\alpha _{00}$ and $\beta _{ij}:=\beta _{00}\;($all $i,j\geq 0)$
and Berger measure $\mu :=\delta _{\alpha _{00}^{2}}\times \delta _{\beta
_{00}^{2}}$. \ 

When $m=1$, the $1$-variable case is still straightforward; i.e., the
canonical subnormal completion is $\alpha _{0},\alpha _{1},\alpha
_{1},\ldots $, with Berger measure $(1-\frac{\alpha _{0}^{2}}{\alpha _{1}^{2}%
})\delta _{0}+\frac{\alpha _{0}^{2}}{\alpha _{1}^{2}}\delta _{\alpha
_{1}^{2}}$. \ In two variables, however, the problem becomes highly
nontrivial. \ For the singular case, and using the results in \cite[Section
6]{tcmp1}, C. Li gave in \cite{Li} a solution, which seems a bit ad hoc and
unmotivated, with extensive calculations using \textit{Mathematica}. \ The
proof in \cite[pages 39 and 40]{tcmp1} establishes the existence of a
representing measure $\mu $ for SCP with quadratic moment data (this is the
case $m=1$ in two variables); however, the ensuing statement that $\operatorname{supp%
}\;\mu \subseteq \mathbb{R}_{+}^{2}$ is made without a proof, and it does
not appear to follow easily from the comments preceding it. \ It is indeed
true, as we show in the present paper using the tools and techniques from %
\cite{tcmp4}.

In Section \ref{Main} below, we shall apply our general strategy to solve
SCP to the case $m=1$ and prove that a representing measure always exist if
the associated moment matrix $M(1)$ is positive semi-definite. \ In Section %
\ref{concrete} we shall calculate the Berger measure using canonical column
relations in the flat extension $M(2)$ of $M(1)$. \ The reader will note how
effective the theory of truncated moment problems can be in detecting the
location of the atoms of the unique representing measure for $M(2)$; this is
in sharp contrast with the ad hoc techniques and extensive symbolic
manipulation present in \cite{Li}.

The case $m=2$, in full generality, remains open; however, in Example \ref%
{exm2} below we solve SCP whenever the associated moment matrix $M(1)$ is
singular. \ For $m\geq 3$, the results in \cite{RGWSI} and \cite{RGWSII}
show that, in the $1$-variable case, it is not always possible to build a
subnormal completion; of course the same is true in two variables: indeed,
if $\alpha _{0},\alpha _{1},\alpha _{2},\alpha _{3}$ is a collection of
weights admitting no subnormal completion, it suffices to consider the $2$%
-variable collection given by $\alpha _{\mathbf{k}}:=\alpha _{k_{1}}$ and $%
\beta _{\mathbf{k}}:=1\;(\left| \mathbf{k}\right| \leq 3)$ in order to
produce such an example. \ 

Problem \ref{SCP} is closely related to truncated moment problems. \ Given
real numbers $\gamma \equiv \gamma ^{(2n)}:=\gamma _{00},\gamma _{01},\gamma
_{10},\gamma _{02},$ $\gamma _{11},\gamma _{20},\cdots ,\gamma _{02n},\cdots
,\gamma _{2n0}$ with $\gamma _{00}>0$, the \textit{truncated real moment
problem} for $\gamma $ entails finding conditions for the existence of a
positive Borel measure $\mu $, supported in $\mathbb{R}^{2}$, such that 
\begin{equation*}
\gamma _{ij}=\int y^{i}x^{j}d\mu ,\quad 0\leq i+j\leq n.
\end{equation*}%
Given $\gamma \equiv \gamma ^{(2n)}$, we can build an associated \textit{%
moment matrix} $M(n)\equiv M(n)(\gamma ):=(M[i,j](\gamma ))_{i,j=0}^{n}$,
where 
\begin{equation*}
M[i,j](\gamma ):=\left( 
\begin{array}{llll}
\gamma _{0,i+j} & \gamma _{1,i+j-1} & \cdots & \gamma _{j,i} \\ 
\gamma _{1,i+j-1} & \gamma _{2,i+j-2} & \cdots & \gamma _{j+1,i-1} \\ 
\text{ \thinspace \thinspace \quad }\vdots & \text{ \thinspace \thinspace
\quad }\vdots & \ddots & \text{ \thinspace \thinspace \quad }\vdots \\ 
\gamma _{i,j} & \gamma _{i+1,j-1} & \cdots & \gamma _{i+j,0}%
\end{array}%
\right) .
\end{equation*}%
We denote the successive rows and columns of $M(n)(\gamma )$ by 
\begin{equation*}
1,X,Y,X^{2},YX,Y^{2},\cdots ,X^{n},\cdots ,Y^{n}.
\end{equation*}%
Observe that each block $M[i,j](\gamma )$ is of \textit{Hankel} form, i.e.,
constant in cross-diagonals. \ (For basic results about truncated moment
problems we refer to \cite{tcmp1} and \cite{tcmp4}.)

We conclude this section by stating a result from \cite{CLY}, which we will
need in Section \ref{Sect3}. \ Recall that a commuting pair $(T_{1},T_{2})$
is $2$\textit{-hyponormal} if the $5$-tuple $%
(T_{1},T_{2},T_{1}^{2},T_{1}T_{2},T_{2}^{2})$ is hyponormal (cf. Section \ref%
{Notation} below). \ For $2$-variable weighted shifts, this is equivalent to
the condition 
\begin{equation*}
M_{\mathbf{u}}(2):=(\gamma _{\mathbf{u}+(m,n)+(p,q)})_{_{0\leq p+q\leq
2}^{0\leq m+n\leq 2}}\geq 0\text{ (all }\mathbf{u}\in \mathbb{Z}_{+}^{2}%
\text{)}\;\text{(cf. \cite[Theorem 2.4]{CLY}),}
\end{equation*}%
that is, 
\begin{equation}
\left( 
\begin{array}{cccccc}
\gamma _{\mathbf{u}} & \gamma _{\mathbf{u}+(0,1)} & \gamma _{\mathbf{u}%
+(1,0)} & \gamma _{\mathbf{u}+(0,2)} & \gamma _{\mathbf{u}+(1,1)} & \gamma _{%
\mathbf{u}+(2,0)} \\ 
\gamma _{\mathbf{u}+(0,1)} & \gamma _{\mathbf{u}+(0,2)} & \gamma _{\mathbf{u}%
+(1,1)} & \gamma _{\mathbf{u}+(0,3)} & \gamma _{\mathbf{u}+(1,2)} & \gamma _{%
\mathbf{u}+(2,1)} \\ 
\gamma _{\mathbf{u}+(1,0)} & \gamma _{\mathbf{u}+(1,1)} & \gamma _{\mathbf{u}%
+(2,0)} & \gamma _{\mathbf{u}+(1,2)} & \gamma _{\mathbf{u}+(2,1)} & \gamma _{%
\mathbf{u}+(3,0)} \\ 
\gamma _{\mathbf{u}+(0,2)} & \gamma _{\mathbf{u}+(0,3)} & \gamma _{\mathbf{u}%
+(1,2)} & \gamma _{\mathbf{u}+(0,4)} & \gamma _{\mathbf{u}+(1,3)} & \gamma _{%
\mathbf{u}+(2,2)} \\ 
\gamma _{\mathbf{u}+(1,1)} & \gamma _{\mathbf{u}+(1,2)} & \gamma _{\mathbf{u}%
+(2,1)} & \gamma _{\mathbf{u}+(1,3)} & \gamma _{\mathbf{u}+(2,2)} & \gamma _{%
\mathbf{u}+(3,1)} \\ 
\gamma _{\mathbf{u}+(2,0)} & \gamma _{\mathbf{u}+(2,1)} & \gamma _{\mathbf{u}%
+(3,0)} & \gamma _{\mathbf{u}+(2,2)} & \gamma _{\mathbf{u}+(3,1)} & \gamma _{%
\mathbf{u}+(4,0)}%
\end{array}%
\right) \geq 0\text{ (all }\mathbf{u}\in \mathbb{Z}_{+}^{2}\text{).}
\label{eq11}
\end{equation}

\medskip An entirely similar formulation exists for $\ell $-hyponormality ($%
\ell \geq 1$), i.e., one requires 
\begin{equation}
M_{\mathbf{u}}(\ell ):=(\gamma _{\mathbf{u}+(m,n)+(p,q)})_{_{0\leq p+q\leq
\ell }^{0\leq m+n\leq \ell }}\geq 0\text{ (all }\mathbf{u}\in \mathbb{Z}%
_{+}^{2}\text{) (cf. \cite[Theorem 2.4]{CLY}).}  \label{mhypo}
\end{equation}

\section{\label{Notation}Notation and Preliminaries}

Let $\mathcal{H}$ be a complex Hilbert space and let $\mathcal{B}(\mathcal{H}%
)$ denote the algebra of bounded linear operators on $\mathcal{H}$. We say
that $T\in \mathcal{B}(\mathcal{H})$ is \textit{normal} if $T^{\ast
}T=TT^{\ast }$, \textit{subnormal} if $T=N|_{\mathcal{H}}$, where $N$ is
normal and $N(\mathcal{H})\mathcal{\subseteq H}$, and \textit{hyponormal} if 
$T^{\ast }T\geq TT^{\ast }$. \ For $S,T\in \mathcal{B}(\mathcal{H})$ let $%
[S,T]:=ST-TS$. \ We say that an $n$-tuple $\mathbf{T}=(T_{1},\cdots ,T_{n})$
of operators on $\mathcal{H}$ is (jointly) \textit{hyponormal} if the
operator matrix 
\begin{equation*}
\lbrack \mathbf{T}^{\ast },\mathbf{T]:=}\left( 
\begin{array}{cccc}
\lbrack T_{1}^{\ast },T_{1}] & [T_{2}^{\ast },T_{1}] & \cdots & [T_{n}^{\ast
},T_{1}] \\ 
\lbrack T_{1}^{\ast },T_{2}] & [T_{2}^{\ast },T_{2}] & \cdots & [T_{n}^{\ast
},T_{2}] \\ 
\vdots & \vdots & \ddots & \vdots \\ 
\lbrack T_{1}^{\ast },T_{n}] & [T_{2}^{\ast },T_{n}] & \cdots & [T_{n}^{\ast
},T_{n}]%
\end{array}%
\right)
\end{equation*}%
is positive semi-definite on the direct sum of $n$ copies of $\mathcal{H}$
(cf. \cite{Ath}, \cite{CMX}). \ The $n$-tuple $\mathbf{T}$ is said to be 
\textit{normal} if $\mathbf{T}$ is commuting and each $T_{i}$ is normal, and 
$\mathbf{T}$ is \textit{subnormal }if $\mathbf{T}$ is the restriction of a
normal $n$-tuple to a common invariant subspace. \ Clearly, normal $%
\Rightarrow $ subnormal $\Rightarrow $ hyponormal.

The Bram-Halmos criterion for subnormality states that an operator $T\in 
\mathcal{B}(\mathcal{H})$ is subnormal if and only if 
\begin{equation*}
\sum_{i,j}(T^{i}x_{j},T^{j}x_{i})\geq 0
\end{equation*}%
for all finite collections $x_{0},x_{1},\cdots ,x_{k}\in \mathcal{H}$ (\cite%
{Bra}, \cite{Con}). \ Using Choleski's algorithm for operator matrices \cite%
{Smu}, it is easy to see that this is equivalent to asserting that the $k$%
-tuple $(T,T^{2},\cdots ,T^{k})$ is hyponormal for all $k\geq 1$.

For $k\geq 1$, we say that a commuting pair $\mathbf{T}\equiv (T_{1},T_{2})$
is $k$\textit{-hyponormal} if $\mathbf{T}%
(k):=(T_{1},T_{2},T_{1}^{2},T_{2}T_{1}$, $T_{2}^{2}$, $\cdots
,T_{1}^{k},T_{2}T_{1}^{k-1},\cdots ,T_{2}^{k})$ is hyponormal (cf. \cite{CLY}%
). \ Clearly, subnormal $\Rightarrow $ $(k+1)$-hyponormal $\Rightarrow $ $k$%
-hyponormal for every $k\geq 1$, and of course $1$-hyponormality agrees with
the usual definition of joint hyponormality. \ The multivariable Bram-Halmos
criterion was obtained in \cite{CLY}, and its formulation is essentially
identical to the $1$-variable one: $\mathbf{T}$ is subnormal if and only if $%
\mathbf{T}(k)$ is hyponormal for all $k\geq 1$.

For $\alpha \equiv \{\alpha _{n}\}_{n=0}^{\infty }$ a bounded sequence of
positive real numbers (called \textit{weights}), let $W_{\alpha }:\ell ^{2}(%
\mathbb{Z}_{+})\rightarrow \ell ^{2}(\mathbb{Z}_{+})$ be the associated 
\textit{unilateral weighted shift}, defined by $W_{\alpha }e_{n}:=\alpha
_{n}e_{n+1}\;($all $n\geq 0)$, where $\{e_{n}\}_{n=0}^{\infty }$ is the
canonical orthonormal basis in $\ell ^{2}(\mathbb{Z}_{+}).$ \ The \textit{%
moments} of $\alpha $ are given as 
\begin{equation*}
\gamma _{k}\equiv \gamma _{k}(\alpha ):=\left\{ 
\begin{array}{cc}
1 & \text{if }k=0 \\ 
\alpha _{0}^{2}\cdots \alpha _{k-1}^{2} & \text{if }k>0%
\end{array}%
\right\} .
\end{equation*}%
It is easy to see that $W_{\alpha }$ is never normal, and that it is
hyponormal if and only if $\alpha _{0}\leq \alpha _{1}\leq \cdots $. \
Similarly, consider double-indexed positive bounded sequences $\alpha \equiv
\{\alpha _{\mathbf{k}}\},\beta \equiv \{\beta _{\mathbf{k}}\}\in \ell
^{\infty }(\mathbb{Z}_{+}^{2})$, $\mathbf{k}\equiv (k_{1},k_{2})\in \mathbb{Z%
}_{+}^{2}:=\mathbb{Z}_{+}\times \mathbb{Z}_{+}$ and let $\ell ^{2}(\mathbb{Z}%
_{+}^{2})$\ be the Hilbert space of square-summable complex sequences
indexed by $\mathbb{Z}_{+}^{2}$. \ (Recall that $\ell ^{2}(\mathbb{Z}%
_{+}^{2})$ is canonically isometrically isomorphic to $\ell ^{2}(\mathbb{Z}%
_{+})\bigotimes \ell ^{2}(\mathbb{Z}_{+})$.) \ We define the $2$-variable
weighted shift $\mathbf{T}\equiv (T_{1},T_{2})$ by 
\begin{equation*}
T_{1}e_{\mathbf{k}}:=\alpha _{\mathbf{k}}e_{\mathbf{k+}\varepsilon _{1}}
\end{equation*}%
\begin{equation*}
T_{2}e_{\mathbf{k}}:=\beta _{\mathbf{k}}e_{\mathbf{k+}\varepsilon _{2}},
\end{equation*}%
where $\mathbf{\varepsilon }_{1}:=(1,0)$ and $\mathbf{\varepsilon }%
_{2}:=(0,1)$. Clearly, 
\begin{equation}
T_{1}T_{2}=T_{2}T_{1}\Longleftrightarrow \beta _{\mathbf{k+}\varepsilon
_{1}}\alpha _{\mathbf{k}}=\alpha _{\mathbf{k+}\varepsilon _{2}}\beta _{%
\mathbf{k}}\;\;(\text{all }\mathbf{k\in }\mathbb{Z}_{+}^{2}).
\label{commuting}
\end{equation}%
In an entirely similar way one can define multivariable weighted shifts.

Given $\mathbf{k}\in \mathbb{Z}_{+}^{2}$, the \textit{moment} of $(\alpha
,\beta )$ of order $\mathbf{k}$ is 
\begin{equation*}
\gamma _{\mathbf{k}}\equiv \gamma _{\mathbf{k}}(\alpha ,\beta ):=\left\{ 
\begin{array}{cc}
1 & \text{if }\mathbf{k}=0 \\ 
\alpha _{(0,0)}^{2}\cdots \alpha _{(k_{1}-1,0)}^{2} & \text{if }k_{1}\geq 1%
\text{ and }k_{2}=0 \\ 
\beta _{(0,0)}^{2}\cdots \beta _{(0,k_{2}-1)}^{2} & \text{if }k_{1}=0\text{
and }k_{2}\geq 1 \\ 
\alpha _{(0,0)}^{2}\cdots \alpha _{(k_{1}-1,0)}^{2}\cdot \beta
_{(k_{1},0)}^{2}\cdots \beta _{(k_{1},k_{2}-1)}^{2} & \text{if }k_{1}\geq 1%
\text{ and }k_{2}\geq 1%
\end{array}%
\right\} .
\end{equation*}%
We remark that, due to the commutativity condition (\ref{commuting}), $%
\gamma _{\mathbf{k}}$ can be computed using any nondecreasing path from $%
(0,0)$ to $(k_{1},k_{2})$.

We also recall a well known characterization of subnormality for
multivariable weighted shifts \cite{JeLu}, due to C. Berger (and
independently to R. Gellar and L.J. Wallen \cite{GeWa}) in the $1$-variable
case: $\ \mathbf{T\equiv (}T_{1},\cdots ,T_{n})$ is subnormal if and only if
there is a probability measure $\mu $ (called the Berger measure of $\mathbf{%
T}$) defined on the $n$-dimensional rectangle $R=[0,a_{1}]\times \cdots
\times \lbrack 0,a_{n}]$ where $a_{i}=\left\| T_{i}\right\| ^{2}$ such that $%
\gamma _{\mathbf{k}}=\int_{R}\mathbf{t}^{\mathbf{k}}d\mu
(t):=\int_{R}t_{1}^{k_{1}}\cdots t_{n}^{k_{n}}d\mu (\mathbf{t})$, for all $%
\mathbf{k\in \mathbb{Z}}_{+}^{n}$. \ 

Consider now a subnormal $1$-variable weighted shift $W_{\alpha }$, with
Berger measure $\xi $, and let $h\geq 1$. \ If we let 
\begin{equation}
\mathcal{M}_{h}:=\bigvee \{e_{n}:n\geq h\}  \label{mh}
\end{equation}
denote the invariant subspace obtained by removing the first $h$ vectors in
the canonical orthonormal basis of $\ell ^{2}(\mathbb{Z}_{+})$, then the
Berger measure of $W_{\alpha }|_{\mathcal{M}_{h}}$ is $\frac{1}{\gamma _{h}}%
t^{h}d\xi (t)$.

An important class of subnormal weighted shifts is obtained by considering
measures $\mu $ with exactly two atoms $t_{0}$ and $t_{1}$. \ These shifts
arise naturally in the Subnormal Completion Problem (\cite{RGWSI}, \cite%
{RGWSII}) and in the theory of truncated moment problems (cf. \cite{Houston}%
, \cite{tcmp1}). \ For $t_{0},t_{1}\in \mathbb{R}_{+}$, $t_{0}<t_{1}$, and $%
\rho _{0},\rho _{1}>0$, the moments of the $2$-atomic measure $\mu :=\rho
_{0}\delta _{t_{0}}+\rho _{1}\delta _{t_{1}}$ (here $\delta _{p}$ denotes
the point-mass probability measure with support the singleton $\{p\}$)
satisfy the $2$-step recursive relation 
\begin{equation}
\gamma _{n+2}=\varphi _{0}\gamma _{n}+\varphi _{1}\gamma _{n+1}\;(n\geq 0);
\label{recrel}
\end{equation}%
at the weight level, this can be written as $\alpha _{n+1}^{2}=\frac{\varphi
_{0}}{\alpha _{n}^{2}}+\varphi _{1}\;(n\geq 0)$. \ The atoms $t_{0}$ and $%
t_{1}$ are the zeros of the generating function 
\begin{equation}
g(t):=t^{2}-\varphi _{1}t-\varphi _{0}.\   \label{generating}
\end{equation}%
More generally, any finitely atomic Berger measure corresponds to a \textit{%
recursively generated} subnormal weighted shift (i.e., one whose moments
satisfy an $r$-step recursive relation); in fact, $r=\operatorname{card}\;\operatorname{supp}
$\ $\mu $. \ In the special case of $r=2$, the theory of recursively
generated weighted shifts makes contact with the work of J. Stampfli in \cite%
{Sta2}, in which he proved that given three positive numbers $\alpha
_{0}<\alpha _{1}<\alpha _{2}$, it is always possible to find a subnormal
weighted shift, denoted $W_{(\alpha _{0},\alpha _{1},\alpha _{2})\symbol{94}%
} $, whose first three weights are $\alpha _{0},\alpha _{1}$ and $\alpha
_{2} $. \ The shift $T\equiv W_{(\alpha _{0},\alpha _{1},\alpha _{2})\symbol{%
94}}$ received special attention in \cite{RGWSII}, and has a $2$-atomic
Berger measure as above; letting $a:=\alpha _{0}^{2}$, $b:=\alpha _{1}^{2}$
and $c:=\alpha _{2}^{2}$, we often refer to this shift as the $abc$ shift. \
We will have occasion to use these shifts in Section \ref{concrete}.

\section{\label{Sect3}Statement of the Subnormal Completion Problem}

\begin{definition}
\label{subnormalcompletion}Given $m\geq 0$ and a finite family of positive
numbers $\Omega _{m}\equiv \{(\alpha _{\mathbf{k}},\beta _{\mathbf{k}%
})\}_{\left| \mathbf{k}\right| \leq m}$, we say that a $2$-variable weighted
shift $\mathbf{T}\equiv (T_{1},T_{2})$ with weight sequences $\alpha _{%
\mathbf{k}}^{\mathbf{T}}$ and $\beta _{\mathbf{k}}^{\mathbf{T}}$ is a
subnormal completion of $\Omega _{m}$ if (i) $\mathbf{T}$ is subnormal, and
(ii) $(\alpha _{\mathbf{k}}^{\mathbf{T}},\beta _{\mathbf{k}}^{\mathbf{T}%
})=(\alpha _{\mathbf{k}},\beta _{\mathbf{k}})$ whenever $\left| \mathbf{k}%
\right| \leq m$.
\end{definition}

\begin{remark}
Note that since a subnormal $2$-variable weighted shift is necessarily
commuting, $\Omega _{m}$ in Definition \ref{subnormalcompletion} satisfies
the commutativity condition in (\ref{commuting}). \ When a family of
positive numbers has this property, we say that it is \textit{commutative}.
\end{remark}

\begin{definition}
Given $m\geq 0$ and a finite family of positive numbers $\Omega _{m}\equiv
\{(\alpha _{\mathbf{k}},\beta _{\mathbf{k}})\}_{\left| \mathbf{k}\right|
\leq m}$, we say that $\hat{\Omega}_{m+1}\equiv \{(\hat{\alpha}_{\mathbf{k}},%
\hat{\beta}_{\mathbf{k}})\}_{\left| \mathbf{k}\right| \leq m+1}$ is an 
\textit{extension} of $\Omega _{m}$ if $(\hat{\alpha}_{\mathbf{k}},\hat{\beta%
}_{\mathbf{k}})=(\alpha _{\mathbf{k}},\beta _{\mathbf{k}})$ whenever $\left| 
\mathbf{k}\right| \leq m$. \ The degree of $\Omega _{m}$, $\deg \;\Omega
_{m} $, is $m+1$. \ When $m=1$, we say that $\Omega _{1}$ is quadratic. \
For $m=2\ell +1$, the moment matrix of $\Omega _{m}$ is 
\begin{equation*}
M(\ell )\equiv M(\Omega _{m})\equiv M_{\mathbf{0}}(\Omega _{m}):=(\gamma
_{(i,j)+(p,q)})_{_{0\leq p+q\leq m}^{0\leq i+j\leq m}}.
\end{equation*}%
Observe that if $\hat{\Omega}_{m+1}$ is commutative, then so is $\Omega _{m}$%
. \ For $m$ odd, $M(\hat{\Omega}_{m+2})$ is an extension of $M(\Omega _{m})$.
\end{definition}

\begin{notation}
When $m=1$, we shall let $a:=\alpha _{00}^{2}$, $b:=\beta _{00}^{2}$, $%
c:=\alpha _{10}^{2}$, $d:=\beta _{01}^{2}$, $e:=\alpha _{01}^{2}$ and $%
f:=\beta _{10}^{2}$. \ To be consistent with the commutativity of a $2$%
-variable weighted shifts whose weight sequences satisfy (\ref{commuting}),
we shall always assume $af=be$. \ The moments of $\Omega _{1}$ are 
\begin{equation*}
\left\{ 
\begin{array}{ccc}
\gamma _{00}:=1 &  &  \\ 
\gamma _{01}:=a & \gamma _{10}:=b &  \\ 
\gamma _{02}:=ac & \gamma _{11}:=be & \gamma _{20}:=bd%
\end{array}%
\right. ,
\end{equation*}%
and the associated moment matrix is 
\begin{equation*}
M(\Omega _{1}):=\left( 
\begin{array}{ccc}
1 & a & b \\ 
a & ac & be \\ 
b & be & bd%
\end{array}%
\right) .
\end{equation*}%
In this case, solving the SCP consists of finding a probability measure $\mu 
$ supported on $\mathbb{R}_{+}^{2}$ such that $\int_{\mathbb{R}%
_{+}^{2}}y^{i}x^{j}\;d\mu (x,y)=\gamma _{ij}\;(i,j\geq 0,\;i+j\leq 2)$. \ 
\end{notation}

Associated with the measure $\mu $ of a subnormal completion is the moment
matrix 
\begin{equation*}
M(2)[\mu ]:=\left( 
\begin{array}{cccccc}
\gamma _{00} & \gamma _{01} & \gamma _{10} & \gamma _{02} & \gamma _{11} & 
\gamma _{20} \\ 
\gamma _{01} & \gamma _{02} & \gamma _{11} & \gamma _{03}[\mu ] & \gamma
_{12}[\mu ] & \gamma _{21}[\mu ] \\ 
\gamma _{10} & \gamma _{11} & \gamma _{20} & \gamma _{12}[\mu ] & \gamma
_{21}[\mu ] & \gamma _{30}[\mu ] \\ 
\gamma _{02} & \gamma _{03}[\mu ] & \gamma _{12}[\mu ] & \gamma _{04}[\mu ]
& \gamma _{13}[\mu ] & \gamma _{22}[\mu ] \\ 
\gamma _{11} & \gamma _{12}[\mu ] & \gamma _{21}[\mu ] & \gamma _{13}[\mu ]
& \gamma _{22}[\mu ] & \gamma _{31}[\mu ] \\ 
\gamma _{20} & \gamma _{21}[\mu ] & \gamma _{30}[\mu ] & \gamma _{22}[\mu ]
& \gamma _{31}[\mu ] & \gamma _{40}[\mu ]%
\end{array}%
\right) \text{ \ (cf. (\ref{eq11})).}
\end{equation*}%
The (quartic) moments of $\mu $ give rise to an extension $\hat{\Omega}_{3}$
of $\Omega _{1}$, so that $M(2)[\mu ]=M(\hat{\Omega}_{3})$. \ It is thus
clear that a necessary condition for the existence of a measure $\mu $ is
the positivity of $M(\hat{\Omega}_{3})$, which in turn implies the
positivity of $M(\Omega _{1})$. $\ $If we now let $p:=\hat{\alpha}_{20}^{2}$%
, $q:=\hat{\alpha}_{11}^{2}$, $r:=\hat{\alpha}_{02}^{2}$ and $s:=\hat{\beta}%
_{02}^{2}$, we see that 
\begin{equation*}
M(\hat{\Omega}_{3}):=\left( 
\begin{array}{cccccc}
1 & a & b & ac & be & bd \\ 
a & ac & be & acp & beq & bdr \\ 
b & be & bd & beq & bdr & bds \\ 
ac & acp & beq & \gamma _{04}[\mu ] & \gamma _{13}[\mu ] & \gamma _{22}[\mu ]
\\ 
be & beq & bdr & \gamma _{13}[\mu ] & \gamma _{22}[\mu ] & \gamma _{31}[\mu ]
\\ 
bd & bdr & bds & \gamma _{22}[\mu ] & \gamma _{31}[\mu ] & \gamma _{40}[\mu ]%
\end{array}%
\right) .
\end{equation*}%
The localizing matrices $M_{x}(\hat{\Omega}_{3})$ and $M_{y}(\hat{\Omega}%
_{3})$ (cf. \cite[Introduction]{tcmp4}) are 
\begin{equation*}
M_{x}(\hat{\Omega}_{3})=\left( 
\begin{array}{ccc}
a & ac & be \\ 
ac & acp & beq \\ 
be & beq & bdr%
\end{array}%
\right) \text{ and }M_{y}(\hat{\Omega}_{3})=\left( 
\begin{array}{ccc}
b & be & bd \\ 
be & beq & bdr \\ 
bd & bdr & bds%
\end{array}%
\right) .
\end{equation*}%
(The matrix $M_{x}(\hat{\Omega}_{3})$ is the compression of $M(\hat{\Omega}%
_{3})$ to the first three rows and to the columns indexed by monomials
containing $X$, that is, $X$, $X^{2}$ and $YX$; the matrix $M_{y}(\hat{\Omega%
}_{3})$ is defined similarly.) \ Observe that $M_{x}(\hat{\Omega}%
_{3})=M_{(0,1)}(1)$ and $M_{y}(\hat{\Omega}_{3})=M_{(1,0)}(1)$ (cf. (\ref%
{eq11})). \ For the existence of a measure $\mu $ supported in $\mathbb{R}%
_{+}^{2}$, it is necessary to have $M_{x}(\hat{\Omega}_{3})\geq 0$ and $%
M_{y}(\hat{\Omega}_{3})\geq 0$. \ 

In this paper we prove that starting with the positivity of $M(\Omega _{1})$
alone, it is possible to choose new weights $p$, $q$, $r$ and $s$ to ensure
the positivity of $M_{x}(\hat{\Omega}_{3})$ and $M_{y}(\hat{\Omega}_{3})$. \
We can do this while simultaneously building a positive flat moment matrix
extension $M(\hat{\Omega}_{3})$ of $M(\Omega _{1})$. \ Once we establish the
simultaneous positivity of $M(\hat{\Omega}_{3})$, $M_{x}(\hat{\Omega}_{3})$
and $M_{y}(\hat{\Omega}_{3})$, the existence of a representing measure $\mu $
follows from the main result in \cite{tcmp4}. \ We prove this in Section \ref%
{Main}. \ In Section \ref{concrete} we give a concrete description of $\mu $
in terms of the initial data $a$, $b$, $c$, $d$ and $e$ and the new weights $%
p$, $q$, $r$ and $s$. \ First, we present in Section \ref{abstract} an
abstract solution to SCP, which uses our new approach, involving localizing
matrices and the results in \cite{tcmp4}.

While the flat extension approach is successful in the case $m=1$, it will
not lead to a solution of SCP in all cases. \ Indeed, it is possible to
build a moment matrix $M(2)\equiv M(\Omega _{3})$ admitting a representing
measure, but with no flat extension $M(3)$ (cf. Section \ref{flat} below). \
This shows that our approach, while very general, will not yield subnormal
completions merely by one-step flat extension techniques. \ In many
instances, solving SCP will require a finite sequence of rank-increasing
extensions followed by a flat extension; this is despite the fact that for
SCP one looks for a measure with support in the nonnegative quarter-plane. \
As a matter of fact, the ``translation of support'' technique we use in
Section \ref{flat} shows that solving SCP is equivalent to solving $K$-TMP,
where $K$ is a compact set satisfying $K\subseteq \mathbb{R}_{+}^{2}$ and $%
K\cap (0,+\infty )^{2}\neq \varnothing $. \ Thus, SCP is a special case of $%
K $-TMP, and it is natural to expect that qualitative aspects of TMP theory
will be appropriately reflected in SCP.

\section{\label{abstract}Abstract Solution of SCP}

In this section we will give an abstract solution of Problem \ref{SCP}. \ We
first consider the main theorem in \cite{tcmp4}. \ Although \cite[Theorem
1.6]{tcmp4} deals with truncated complex moment problems, there is an
entirely equivalent version for the case of two real variables, which we now
state.

\begin{theorem}
\label{tcmp4}Let $\mathcal{P}\equiv \left\{ p_{1},\dots ,p_{N}\right\}
\subseteq \mathbb{C}\left[ x,y\right] $ and define $k_{i}$ by $\deg
\;p_{i}=2k_{i}$ or $\deg \;p_{i}=2k_{i}-1$ ($1\leq i\leq N$). \ There exists
a $\operatorname{rank}\;M\left( n\right) $-atomic representing measure for $\gamma
^{\left( 2n\right) }$ supported in $K_{\mathcal{P}}:=\left\{ (x,y)\in 
\mathbb{R}^{2}:p_{i}\left( x,y\right) \geq 0,\;1\leq i\leq N\right\} $ if
and only if $M\left( n\right) \geq 0$ and there is some flat extension $%
M\left( n+1\right) $ for which $M_{p_{i}}\left( n+k_{i}\right) \geq 0$ 
\textup{(}$1\leq i\leq N$\textup{)}. \ In this case, the representing
measure for $M\left( n+1\right) $ is $\operatorname{rank}\;M\left( n\right) $%
-atomic, supported in $K_{\mathcal{P}}$, and with precisely $\operatorname{rank}%
\;M(n)-\operatorname{rank}\;M_{p_{i}}(n+k_{i})$ atoms in $\mathcal{Z}\left(
p_{i}\right) :=\{(x,y)\in \mathbb{R}^{2}:p_{i}(x,y)=0\}$ \textup{(}$1\leq
i\leq N$\textup{)}.
\end{theorem}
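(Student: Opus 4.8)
The plan is to obtain Theorem~\ref{tcmp4} from its complex-variable counterpart \cite[Theorem 1.6]{tcmp4}, by transporting the latter through the standard dictionary between the truncated moment problem in the two real variables $x,y$ and the truncated complex moment problem in the single variable $z=x+iy$. First I would fix the change of basis $J\equiv J(n)$ carrying the real monomials $\{y^{i}x^{j}\}_{i+j\leq n}$ to the complex monomials $\{\bar{z}^{i}z^{j}\}_{i+j\leq n}$, and record that, up to a fixed normalization, the complex moment matrix equals $J^{\ast}M(n)J$; consequently positivity, rank, and the flat-extension property transfer verbatim, as do the column-relation varieties $\mathcal{Z}(\cdot)$, while Borel measures on $\mathbb{R}^{2}$ correspond bijectively to Borel measures on $\mathbb{C}\cong\mathbb{R}^{2}$ with the same atoms and masses. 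Under this dictionary each $p_{i}(x,y)$ becomes a polynomial $q_{i}(z,\bar{z})$, the set $K_{\mathcal{P}}$ becomes the constraint region of \cite[Theorem 1.6]{tcmp4}, and $J$ intertwines the real localizing matrices $M_{p_{i}}(n+k_{i})$ with the complex localizing matrices used there; granting this, Theorem~\ref{tcmp4} is precisely the transported statement.

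For a reader who prefers a self-contained argument in the real picture, I would reprove it directly, writing $\hat{f}$ for the coefficient vector of a polynomial $f$ with $\deg f\leq n$, so that for any finitely atomic measure $\nu$ one has $\langle M_{p_{i}}(n+k_{i})[\nu]\,\hat{f},\hat{f}\rangle=\int p_{i}|f|^{2}\,d\nu$. For the forward implication I would start from a representing measure $\mu=\sum_{j=1}^{r}\rho_{j}\delta_{t_{j}}$ with $r=\rank M(n)$, $\rho_{j}>0$, and distinct atoms $t_{j}\in K_{\mathcal{P}}$: then $M(n)=M(n)[\mu]\geq0$ automatically, and since $\mu$ is $r$-atomic and $M(n)[\mu]$ is a compression of $M(n+1)[\mu]$, the chain $\rank M(n)\leq\rank M(n+1)[\mu]\leq\card\supp\mu=r=\rank M(n)$ shows that $M(n+1):=M(n+1)[\mu]$ is a flat extension of $M(n)$. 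By uniqueness of flat propagation (the Flat Extension Theorem of Curto and Fialkow; cf.\ \cite{tcmp1}), $\mu$ represents the entire flat tower over $M(n+1)$, so the localizing matrix determined by that extension is exactly $M_{p_{i}}(n+k_{i})[\mu]$, and $\langle M_{p_{i}}(n+k_{i})[\mu]\,\hat{f},\hat{f}\rangle=\sum_{j}\rho_{j}p_{i}(t_{j})|f(t_{j})|^{2}\geq0$ since $p_{i}\geq0$ on $\supp\mu\subseteq K_{\mathcal{P}}$; hence $M_{p_{i}}(n+k_{i})\geq0$.

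For the converse and the atom count I would again invoke the Flat Extension Theorem: a flat extension $M(n+1)$ of $M(n)\geq0$ carries a unique representing measure $\mu=\sum_{j=1}^{r}\rho_{j}\delta_{t_{j}}$ with $r=\rank M(n)$ distinct atoms realizing the whole tower, so that $M_{p_{i}}(n+k_{i})=M_{p_{i}}(n+k_{i})[\mu]$. Let $E$ denote the evaluation map $\hat{f}\mapsto(f(t_{1}),\dots,f(t_{r}))$ on polynomials of degree $\leq n$; from $M(n)[\mu]=E^{\ast}\operatorname{diag}(\rho_{1},\dots,\rho_{r})E$ one reads off $\rank E=\rank M(n)=r$, so $E$ is onto $\mathbb{C}^{r}$ and there exist $f_{1},\dots,f_{r}$ of degree $\leq n$ with $f_{\ell}(t_{j})=\delta_{\ell j}$. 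Then $0\leq\langle M_{p_{i}}(n+k_{i})\,\hat{f}_{\ell},\hat{f}_{\ell}\rangle=\rho_{\ell}p_{i}(t_{\ell})$, and $\rho_{\ell}>0$ forces $p_{i}(t_{\ell})\geq0$ for every $i$ and $\ell$; hence every atom lies in $K_{\mathcal{P}}$, i.e.\ $\supp\mu\subseteq K_{\mathcal{P}}$. Finally, writing $D_{i}:=\operatorname{diag}(\rho_{1}p_{i}(t_{1}),\dots,\rho_{r}p_{i}(t_{r}))\geq0$, so that $M_{p_{i}}(n+k_{i})[\mu]=E^{\ast}D_{i}E$ with $E$ onto, I would conclude that $\rank M_{p_{i}}(n+k_{i})=\card\{j:p_{i}(t_{j})>0\}=r-\card(\supp\mu\cap\mathcal{Z}(p_{i}))$, which rearranges to $\card(\supp\mu\cap\mathcal{Z}(p_{i}))=\rank M(n)-\rank M_{p_{i}}(n+k_{i})$.

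The step I expect to be the main obstacle is making rigorous the hypothesis ``there is some flat extension $M(n+1)$ for which $M_{p_{i}}(n+k_{i})\geq0$'': one must verify that the entries of $M_{p_{i}}(n+k_{i})$ of order above $2(n+1)$ are exactly those forced by the \emph{unique} flat propagation $M(n+1)\to M(n+2)\to\cdots\to M(n+k_{i})$, so that the condition is well-posed and coincides with $M_{p_{i}}(n+k_{i})[\mu]$; assembling this together with the positivity and consistency requirements needed to invoke the Flat Extension Theorem is where the real care lies. The interpolation step in the converse, by contrast, is elementary once the setup is in place. In the reduction route the analogous delicate point is checking that $J(n)$ genuinely intertwines the real and complex localizing matrices; once that is confirmed, Theorem~\ref{tcmp4} follows from \cite[Theorem 1.6]{tcmp4} with no further work.
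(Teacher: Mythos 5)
The paper does not prove this statement at all: it is imported verbatim (in real-variable form) from \cite[Theorem 1.6]{tcmp4}, with the only justification being the remark that the complex and two-real-variable truncated moment problems are equivalent. Your first route --- transporting \cite[Theorem 1.6]{tcmp4} through the dictionary $z=x+iy$ --- is therefore exactly the paper's (implicit) argument, and is the intended reading. Your self-contained sketch is a faithful reconstruction of how the cited theorem is actually proved in \cite{tcmp4}: the forward direction via the rank chain $\operatorname{rank}M(n)\leq\operatorname{rank}M(n+1)[\mu]\leq\operatorname{card}\operatorname{supp}\mu$, and the converse via the Flat Extension Theorem of \cite{tcmp1} together with the interpolation argument $\langle M_{p_i}\hat f_\ell,\hat f_\ell\rangle=\rho_\ell p_i(t_\ell)\geq 0$ and the factorization $M_{p_i}=E^{\ast}D_iE$ for the atom count; all of these steps are sound. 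The one point of genuine care, which you correctly flag rather than gloss over, is that $M_{p_i}(n+k_i)$ is only well defined because the flat extension $M(n+1)$ propagates uniquely to $M(n+k_i)$; that is precisely the technical content supplied by \cite{tcmp1} and \cite{tcmp4}, so nothing is missing.
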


With the aid of Theorem \ref{tcmp4}, we can now state and prove a result
which gives a sufficient condition for the solubility of SCP in two
variables. \ Our version does not completely match the conditions listed on
Theorem \ref{onevariable}, and we now explain why. \ In one variable,
building a flat moment matrix extension of a Hankel matrix entails adding an
extra row and an extra column, and checking that the rank is preserved. \
This entails checking the range condition in Theorem \ref{onevariable}(iii)
and ensuring that the new lower right-hand corner entry satisfies the
requirement in Smul'jan's Lemma \cite{Smu}:

\begin{lemma}
\label{smu}(cf. \cite[Proposition 2.2]{tcmp1}) \ Consider the $2\times 2$
block matrix $D:=\left( 
\begin{array}{cc}
A & B \\ 
B^{\ast } & C%
\end{array}%
\right) $. \ Then 
\begin{equation*}
D\geq 0\Longleftrightarrow A\geq 0\text{, }B=AW\text{ for some }W\text{, and 
}C\geq W^{\ast }AW\text{.}
\end{equation*}
\end{lemma}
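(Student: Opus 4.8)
The plan is to reduce the equivalence to a single genuinely analytic fact — a range inclusion — after which everything else is a one-line matrix factorization combined with the congruence-invariance of positive semi-definiteness. All operators here act on finite-dimensional spaces, so all ranges are closed and there is no difficulty with the existence of the various factorizations used below.

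First I would prove the implication $D\geq 0\Rightarrow \Ran B\subseteq \Ran A$. Since $A=A^{\ast}$, one has $\Ran A=(\ker A)^{\perp}$, so this inclusion is equivalent to $\ker A\subseteq \ker B^{\ast}$. Fix $x\in\ker A$ and an arbitrary vector $y$ in the second coordinate space. Expanding the quadratic form of $D$ at the vector $tx\oplus y$ with $t\in\mathbb{R}$ and using $Ax=0$ gives
\[
0\leq\langle D\,(tx\oplus y),\,tx\oplus y\rangle=2t\,\operatorname{Re}\langle B^{\ast}x,y\rangle+\langle Cy,y\rangle\qquad(t\in\mathbb{R}).
\]
Letting $t\to\pm\infty$ forces $\operatorname{Re}\langle B^{\ast}x,y\rangle=0$, and replacing $y$ by $iy$ forces $\operatorname{Im}\langle B^{\ast}x,y\rangle=0$; hence $B^{\ast}x=0$. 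Consequently every column of $B$ lies in $\Ran A$, so we may solve $Aw_{j}=b_{j}$ column by column and assemble a matrix $W$ with $B=AW$. Any two such matrices differ by one whose range lies in $\ker A$, and a short computation then shows $W^{\ast}AW$ is independent of the particular choice, so the conditions in the statement are unambiguous.

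Next, for any $W$ with $B=AW$ (hence $B^{\ast}=W^{\ast}A$, since $A=A^{\ast}$), I would record the identity
\[
D=\begin{pmatrix}I&0\\W^{\ast}&I\end{pmatrix}\begin{pmatrix}A&0\\0&C-W^{\ast}AW\end{pmatrix}\begin{pmatrix}I&W\\0&I\end{pmatrix},
\]
verified by direct multiplication. The rightmost factor is invertible (its inverse is obtained by replacing $W$ by $-W$), and its adjoint is the leftmost factor. Since conjugation by an invertible matrix and its adjoint preserves positive semi-definiteness in both directions, we conclude that $D\geq 0$ if and only if $A\oplus(C-W^{\ast}AW)\geq 0$, that is, if and only if $A\geq 0$ and $C\geq W^{\ast}AW$.

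Putting the two steps together finishes the proof. If $D\geq 0$, then $A\geq 0$ by compression to the first block, the first step yields a $W$ with $B=AW$, and the factorization then gives $C\geq W^{\ast}AW$; conversely, given $A\geq 0$, $B=AW$, and $C\geq W^{\ast}AW$, the same factorization gives $D\geq 0$. The only place where positivity does real work is the range inclusion $\Ran B\subseteq \Ran A$ in the first step — that is the crux of the argument — while the matrix factorization is pure bookkeeping. (In infinite dimensions one would additionally need $A$ to have closed range, or would replace $W$ by a bounded solution of $B=A^{1/2}W$; in the finite-dimensional setting relevant here no such care is needed.)
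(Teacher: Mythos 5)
Your proof is correct. The paper states this lemma as a known result (Smul'jan's lemma, cited as \cite[Proposition 2.2]{tcmp1}) and gives no proof of its own, so there is nothing to compare against; the argument you give --- establishing the range inclusion $\Ran B\subseteq \Ran A$ from positivity of the quadratic form on $\ker A$, checking that $W^{\ast}AW$ is independent of the choice of $W$, and then reducing to the block-diagonal form $A\oplus (C-W^{\ast}AW)$ via the congruence by $\left(\begin{smallmatrix} I & W\\ 0 & I\end{smallmatrix}\right)$ --- is the standard one and is complete in the finite-dimensional setting used here.
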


In two variables, what one adds is not a row and a column but instead a 
\textit{block} of rows and a \textit{block} of columns; while it is still
possible to preserve the range condition, the new lower right-hand corner is
not a number but a square matrix, which must necessarily be Hankel for the
extension to be a moment matrix. \ One easily finds out that $\ell $%
-hyponormality (cf. Theorem \ref{onevariable}(ii)), while necessary, is no
longer sufficient to prove the hankelicity of the new lower right-hand
block. \ Thus, our result avoids mention of $\ell $-hyponormality. \
Moreover, solving the SCP admits two structurally different cases: $m$ odd
and $m$ even. \ In the former case, $\deg \;\Omega _{m}\;(=m+1)$ is even, so
we have enough moments to build the moment matrix $M(\Omega _{m})$. \ 

The same is not true, however, when $m=2k$, since we have moments up to
degree $2k+1$, and this does not allow us to build a complete moment matrix.
\ In the terminology of Lemma \ref{smu}, we have $A:=M(\Omega _{m-1})$, and
also the $B$ block (consisting of moments up to degree $m+1$), but no $C$
block. \ Since we are seeking a moment matrix $M(\Omega _{m+1})$, with
moments up to degree $2m+2$, we can certainly require that Ran $B\subseteq $
Ran $A\equiv $ Ran $M(\Omega _{m-1})$, but that in itself does not generate
the additional moments. \ One could attempt to define the $C$ block as $%
W^{\ast }AW$ (where $W$ solves the equation $AW=B$), but this in general
does not produce a Hankel block $C$, as has been observed in \cite{tcmp6}. \
Therefore, it becomes necessary to postulate the existence of moments of
degree $m+1$ that, together with the initial data $\Omega _{m}$, allows us
to build a moment matrix, which we will call $M(\Omega _{m+1})$. \ 

\begin{theorem}
\label{main}Let $\Omega _{m}:=\{(\alpha _{\mathbf{k}},\beta _{\mathbf{k}}):|%
\mathbf{k}|\leq m\}$ be an initial set of positive weights satisfying the
commutativity condition $\beta _{\mathbf{k+}\varepsilon _{1}}\alpha _{%
\mathbf{k}}=\alpha _{\mathbf{k+}\varepsilon _{2}}\beta _{\mathbf{k}}\;\;($%
all $\mathbf{k}\in \mathbb{Z}_{+}^{2}$ with $\left| \mathbf{k+}\varepsilon
_{i}\right| \leq m\;(i=1,2))$, and let $\tilde{m}:=2\left[ \frac{m}{2}\right]
+1$; thus $\tilde{m}=m$ if $m$ is odd and $\tilde{m}=m+1$ if $m$ is even. \
Assume that $M(\Omega _{\tilde{m}})\geq 0$, and that $\Omega _{\tilde{m}}$
admits a commutative extension $\hat{\Omega}_{\tilde{m}+2}$ such that the
moment matrix $M(\hat{\Omega}_{\tilde{m}+2})$ is a flat (i.e.,
rank-preserving) extension of $M(\Omega _{\tilde{m}})$, with $M_{x}(\hat{%
\Omega}_{\tilde{m}+2})\geq 0$ and $M_{y}(\hat{\Omega}_{\tilde{m}+2})\geq 0$.
\ Then there exists a $\operatorname{rank}\;M\left( \Omega _{\tilde{m}}\right) $%
-atomic representing measure $\mu $ supported in $\mathbb{R}_{+}^{2}$, with
precisely $\operatorname{rank}\;M(\Omega _{\tilde{m}})-\operatorname{rank}\;M_{x}(\hat{\Omega%
}_{\tilde{m}+2})$ atoms in $\{0\}\times \mathbb{R}_{+}\,$(resp. $\operatorname{rank}%
\;M(\Omega _{\tilde{m}})-\operatorname{rank}\;M_{y}(\hat{\Omega}_{\tilde{m}+2})$
atoms in $\mathbb{R}_{+}\times \{0\}$). $\ $The measure $\mu $ is the Berger
measure of a subnormal completion $\hat{\Omega}_{\infty }$ of $\Omega _{m}$,
provided at least one atom of $\mu $ lies inside the positive quadrant in $%
\mathbb{R}^{2}$.
\end{theorem}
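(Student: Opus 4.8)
The plan is to obtain Theorem~\ref{main} as a direct application of the real two-variable form of Theorem~\ref{tcmp4}, run on the polynomials $p_1(x,y):=x$ and $p_2(x,y):=y$. For these, $\deg p_1=\deg p_2=1=2\cdot 1-1$, so $k_1=k_2=1$, $K_{\mathcal P}=\{(x,y)\in\mathbb{R}^2:x\ge 0,\ y\ge 0\}=\mathbb{R}_+^2$, $\mathcal Z(p_1)=\{0\}\times\mathbb{R}$ and $\mathcal Z(p_2)=\mathbb{R}\times\{0\}$. Since $\tilde m$ is odd I would put $n:=\frac{\tilde m+1}{2}$ (a positive integer); then $\deg\Omega_{\tilde m}=\tilde m+1=2n$, so $M(\Omega_{\tilde m})$ is a moment matrix $M(n)$, while $\deg\hat\Omega_{\tilde m+2}=\tilde m+3=2(n+1)$, so $M(\hat\Omega_{\tilde m+2})$ is a moment matrix $M(n+1)$ that (block-)extends $M(n)$. (When $m$ is even this uses that the given data $\Omega_{\tilde m}=\Omega_{m+1}$ already contains the postulated degree-$(m+1)$ weights and extends $\Omega_m$, and that commutativity of $\Omega_{\tilde m}$ is inherited from its commutative extension $\hat\Omega_{\tilde m+2}$.) The first genuine step is then to record the dictionary between the localizing matrices used here and those of \cite{tcmp4}: exactly as in the case $m=1$, where one checks $M_x(\hat\Omega_3)=M_{(0,1)}(1)$ and $M_y(\hat\Omega_3)=M_{(1,0)}(1)$, the matrix $M_x(\hat\Omega_{\tilde m+2})$ --- the compression of $M(\hat\Omega_{\tilde m+2})$ to the rows of degree $\le n$ and the columns indexed by the $X$-multiples among the monomials of degree $\le n+1$ --- is precisely the localizing matrix $M_{p_1}(n+k_1)=M_x(n+1)$ of \cite{tcmp4}, and likewise $M_y(\hat\Omega_{\tilde m+2})=M_{p_2}(n+k_2)=M_y(n+1)$. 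Under this identification the hypotheses of Theorem~\ref{main} read: $M(n)\ge 0$ and $M(n)$ admits a flat extension $M(n+1)$ with $M_{p_1}(n+k_1)\ge 0$ and $M_{p_2}(n+k_2)\ge 0$, which are exactly the hypotheses of Theorem~\ref{tcmp4}.

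Invoking Theorem~\ref{tcmp4} then yields a $\rank M(n)$-atomic representing measure $\mu$ for the moments of $M(n+1)$, supported in $K_{\mathcal P}=\mathbb{R}_+^2$, with precisely $\rank M(n)-\rank M_{p_i}(n+k_i)$ atoms in $\mathcal Z(p_i)$; since $\supp\mu\subseteq\mathbb{R}_+^2$, intersecting with $\mathbb{R}_+^2$ turns this into $\rank M(\Omega_{\tilde m})-\rank M_x(\hat\Omega_{\tilde m+2})$ atoms on $\{0\}\times\mathbb{R}_+$ and $\rank M(\Omega_{\tilde m})-\rank M_y(\hat\Omega_{\tilde m+2})$ atoms on $\mathbb{R}_+\times\{0\}$, as claimed. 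Because $\gamma_{00}=1$, $\mu$ is a probability measure, and since $\mu$ represents $M(\Omega_{\tilde m})$ it reproduces every moment $\gamma_{\mathbf k}$ of $\Omega_m$ with $|\mathbf k|\le m+1$.

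The last step I would carry out is to turn $\mu$ into a subnormal completion, and this is the only place the final hypothesis is used. Suppose $\mu$ has an atom $(t_0,s_0)\in(0,+\infty)^2$ of mass $\rho_0>0$. Then for every $\mathbf k\in\mathbb{Z}_+^2$ one has $\gamma_{\mathbf k}=\int t_1^{k_1}t_2^{k_2}\,d\mu\ge\rho_0 t_0^{k_1}s_0^{k_2}>0$, and these numbers are bounded since $\supp\mu$ is finite; hence $\alpha_{\mathbf k}^{\mathbf T}:=(\gamma_{\mathbf k+\varepsilon_1}/\gamma_{\mathbf k})^{1/2}$ and $\beta_{\mathbf k}^{\mathbf T}:=(\gamma_{\mathbf k+\varepsilon_2}/\gamma_{\mathbf k})^{1/2}$ are positive bounded sequences which, the $\gamma_{\mathbf k}$ being genuine moments of a measure, automatically satisfy the commutativity condition (\ref{commuting}). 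Letting $\mathbf T\equiv(T_1,T_2)$ be the associated $2$-variable weighted shift, Berger's theorem (recalled in Section~\ref{Notation}) makes $\mathbf T$ subnormal with Berger measure $\mu$. Finally, since $\mu$ reproduces the moments of $\Omega_m$ up to degree $m+1$, the path-independent moment formula for a commutative weighted shift forces $(\alpha_{\mathbf k}^{\mathbf T},\beta_{\mathbf k}^{\mathbf T})=(\alpha_{\mathbf k},\beta_{\mathbf k})$ whenever $|\mathbf k|\le m$, so $\hat\Omega_\infty:=\{(\alpha_{\mathbf k}^{\mathbf T},\beta_{\mathbf k}^{\mathbf T})\}$ is a subnormal completion of $\Omega_m$ with Berger measure $\mu$.

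The routine parts are the translation into the language of \cite{tcmp4} and the bookkeeping of the atom counts (immediate once one notes $\mathcal Z(x)\cap\mathbb{R}_+^2=\{0\}\times\mathbb{R}_+$ and $\mathcal Z(y)\cap\mathbb{R}_+^2=\mathbb{R}_+\times\{0\}$). The step I expect to need real care --- and the one left without proof in the earlier treatments of the quadratic case in \cite{Li} and \cite[pages 39 and 40]{tcmp1} --- is the passage from $\mu$ to the subnormal completion: recognizing $\mathbb{R}_+^2$ as the positivity set $K_{\mathcal P}$ of the pair $\{x,y\}$ is exactly what makes the statement $\supp\mu\subseteq\mathbb{R}_+^2$ available, and a single atom in the open quadrant is precisely what forces $\gamma_{\mathbf k}>0$ for all $\mathbf k$, so that a genuine everywhere-defined subnormal $2$-variable weighted shift can be read off from $\mu$; without such an atom some $\gamma_{\mathbf k}$ may vanish and the hypothesis cannot be dropped.
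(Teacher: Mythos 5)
Your proof is correct and follows exactly the route the paper takes: Theorem~\ref{main} is obtained as a direct application of Theorem~\ref{tcmp4} with $p_1(x,y)=x$, $p_2(x,y)=y$, $k_1=k_2=1$, $K_{\mathcal P}=\mathbb{R}_+^2$, and the identification of $M_x(\hat\Omega_{\tilde m+2})$, $M_y(\hat\Omega_{\tilde m+2})$ with the localizing matrices $M_{p_i}(n+k_i)$. The only difference is one of completeness: the paper's proof stops at ``straightforward application,'' whereas you also spell out the passage from the representing measure $\mu$ to the subnormal completion (positivity and boundedness of all $\gamma_{\mathbf k}$ forced by an atom in the open quadrant, Berger's theorem, and the matching of the initial weights), which is a correct and worthwhile elaboration of what the paper leaves implicit.
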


\begin{proof}
In the case at hand, the polynomials $p_{i}$ are $p_{1}(x,y):=x$ and $%
p_{2}(x,y):=y$; thus, $k_{1}=k_{2}=1$. \ It follows that $K_{\mathcal{P}}=%
\mathbb{R}_{+}^{2}$ and that $M_{p_{1}}\left( n+k_{1}\right) =M_{x}(n+1)$
and $M_{p_{2}}\left( n+k_{2}\right) =M_{y}(n+1)$. \ Our result now follows
from a straightforward application of Theorem \ref{tcmp4}.
\end{proof}

Despite its simplicity, Theorem \ref{main} is quite useful, as we will see
in the next section. \ We conclude this section by showing how the
additional moments required in case $m$ is even are sometimes determined by $%
M(\Omega _{m-1})$.

\begin{example}
\label{exm2}Let $m=2$ and assume that $A:=M(\Omega _{1})\geq 0$ and $\det
A=0 $. \ Then there exist moments $\gamma _{i,j}\;(i+j=4)$ such that $%
M(\Omega _{3})\geq 0$ is a flat extension of $A$. $\ $The case when $\operatorname{%
rank}\;A=1$ is easily disposed of, so without loss of generality we focus on
the case $Y=a1+bX$ in the column space of $A$. \ We are assuming that $A\geq
0$, $M_{x}(\Omega _{3})\geq 0$, $M_{y}(\Omega _{3})\geq 0$ and Ran $%
B\subseteq $ Ran $A$. \ (Observe that $M_{x}(\Omega _{3})$ and $M_{y}(\Omega
_{3})$ include moments up to degree $3$, so building them requires no new
moments.) \ The equation $\det \;A=0$ uniquely determines $\gamma _{02}$,
from which we obtain at once the weight 
\begin{equation*}
\beta _{01}=\frac{\alpha _{00}^{2}\beta _{00}^{2}\alpha _{10}^{2}-2\alpha
_{00}^{2}\beta _{00}^{2}\alpha _{01}^{2}+\beta _{00}^{2}\alpha _{01}^{4}}{%
\alpha _{00}^{2}(\alpha _{10}^{2}-\alpha _{00}^{2})}.\ 
\end{equation*}%
Since Ran $B\subseteq $ Ran $A$, each column in $B$ must be a linear
combination of the columns $1$ and $X$, and straightforward calculations
using \textit{Mathematica} yield unique values for $\alpha _{20}$, $\alpha
_{11}$ and $\alpha _{02}$. \ Concretely, 
\begin{eqnarray*}
\alpha _{20}^{2} &=&\frac{\alpha _{00}^{2}\alpha _{10}^{4}-\alpha
_{00}^{2}\alpha _{10}^{2}\alpha _{01}^{2}+\alpha _{00}^{2}\alpha
_{01}^{2}\alpha _{11}^{2}-\alpha _{10}^{2}\alpha _{01}^{2}\alpha _{11}^{2}}{%
\alpha _{10}^{2}(\alpha _{00}^{2}-\alpha _{01}^{2})} \\
\alpha _{11}^{2} &=&\frac{\alpha _{00}^{2}\alpha _{10}^{2}\alpha
_{01}^{2}-\alpha _{00}^{2}\alpha _{01}^{4}-\alpha _{00}^{2}\alpha
_{10}^{2}\alpha _{02}^{2}+2\alpha _{00}^{2}\alpha _{01}^{2}\alpha
_{02}^{2}-\alpha _{01}^{4}\alpha _{02}^{2}}{\alpha _{01}^{2}(\alpha
_{00}^{2}-\alpha _{01}^{2})} \\
\alpha _{02}^{2} &=&\frac{\alpha _{00}^{2}(\beta _{00}^{2}\alpha
_{10}^{2}-\beta _{00}^{2}\alpha _{01}^{2}+\alpha _{00}^{2}\beta
_{02}^{2}-\alpha _{10}^{2}\beta _{02}^{2})}{\beta _{00}^{2}(\alpha
_{00}^{2}-\alpha _{01}^{2})}.
\end{eqnarray*}%
With this information at our disposal, it is now straightforward to check
that the $C$ block, defined as $C:=mW^{\ast }AW$ (where $AW=B)$ is Hankel. \
Thus, $M(\Omega _{3}):=\left( 
\begin{array}{cc}
A & B \\ 
B^{\ast } & C%
\end{array}%
\right) $ is a moment matrix extension of $A$, and moreover $\operatorname{rank}%
\;M(\Omega _{3})=\operatorname{rank}\;A=2$. \ It is now clear that SCP admits a
solution in this particular case. \ \newline
One might wish to extend the above reasoning to the case $\operatorname{rank}\;A=3$,
as follows. \ Let $W:=A^{-1}B$ and let $C:=W^{\ast }AW$. \ It is well known
that $C$ is in general not Hankel, and that one can make it Hankel by adding
a rank-one positive matrix $P$. \ Thus, $M(\Omega _{3}):=\left( 
\begin{array}{cc}
A & B \\ 
B^{\ast } & C+P%
\end{array}%
\right) $ is a positive moment matrix, and $\operatorname{rank}\;M(\Omega _{3})=4$.
\ The solution of the Quartic Moment Problem \cite{tcmp6} now says that
there exists a flat extension $M(\Omega _{5})$ of $M(\Omega _{3})$. \
Unfortunately, we can't tell whether the support of the representing measure
for $M(\Omega _{5})$ is contained in the first quadrant in $\mathbb{R}^{2}$.
\ This would require verifying that the localizing matrices $M_{x}(\Omega
_{5})$ and $M_{y}(\Omega _{5})$ are positive. \ If we knew that they are
flat extensions of $M_{x}(\Omega _{3})$ and $M_{y}(\Omega _{3})$, resp.,
then of course we would be done. \ This fact is false in general, but it
might be true in the context of SCP; however, we have not been able to prove
it for SCP.
\end{example}

\section{\label{Main}Localizing Matrices as Flat Extension Builders}

We now specialize to the case $m=1$ in two variables, and show that the
condition $M(\Omega _{1})\geq 0$ is sufficient for the existence of a
subnormal completion.

\begin{theorem}
\label{quartic}Let $\Omega _{1}$ be a quadratic, commutative, initial set of
positive weights, and assume $M(\Omega _{1})\geq 0$. \ Then there always
exists a quartic commutative extension $\hat{\Omega}_{3}$ of $\Omega _{1}$
such that $M(\hat{\Omega}_{3})$ is a flat extension of $M(\Omega _{1})$, and 
$M_{x}(\hat{\Omega}_{3})\geq 0$ and $M_{y}(\hat{\Omega}_{3})\geq 0$. \ As a
consequence, $\Omega _{1}$ admits a subnormal completion $\mathbf{T}_{\hat{%
\Omega}_{\infty }}$.
\end{theorem}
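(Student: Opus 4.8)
The plan is to reduce the statement to Theorem \ref{main} (and hence, ultimately, to Theorem \ref{tcmp4}) by constructing the new weights $p,q,r,s$ explicitly. By Theorem \ref{main} applied with $m=1$ (so $\tilde m=1$), it suffices to produce a commutative quartic extension $\hat\Omega_3$ of $\Omega_1$ such that $M(\hat\Omega_3)$ is a flat extension of $M(\Omega_1)$ and the two localizing matrices $M_x(\hat\Omega_3)=M_{(0,1)}(1)$ and $M_y(\hat\Omega_3)=M_{(1,0)}(1)$ are positive semi-definite; the last clause of Theorem \ref{main} then yields the Berger measure, provided one checks that an atom lands in the open quadrant. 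So the bulk of the work is a finite-dimensional positivity argument, organized according to $\rank M(\Omega_1)\in\{1,2,3\}$.

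First I would dispose of the low-rank cases. If $\rank M(\Omega_1)=1$, the canonical completion with all weights equal to $(\alpha_{00},\beta_{00})$ works (Berger measure $\delta_{a}\times\delta_{b}$), and one checks this corresponds to choosing $p=q=r=s=1$, giving a rank-one flat extension with manifestly positive localizing matrices. If $\rank M(\Omega_1)=2$, there is a nontrivial column relation among $1,X,Y$; after normalizing one may assume (as in Example \ref{exm2}) that $Y=\lambda 1+\mu X$ in the column space, and this relation forces the values of $q,r,s$ (and of the remaining quartic moments) needed for a flat extension. One then verifies hankelicity of the new lower-right block and positivity of $M_x,M_y$ directly from $M(\Omega_1)\ge 0$ and the commutativity relation $af=be$; here the $abc$-shift machinery and the structure of recursively generated shifts from Section \ref{Notation} make the computation transparent.

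The main case, and the main obstacle, is $\rank M(\Omega_1)=3$, i.e. $M(\Omega_1)>0$. Here there is no column relation to hand us the new weights, so one must genuinely \emph{choose} $p,q,r,s$. The strategy is: first pick $q,r,s$ so that the localizing matrix $M_y(\hat\Omega_3)=\left(\begin{smallmatrix} b & be & bd\\ be & beq & bdr\\ bd & bdr & bds\end{smallmatrix}\right)$ is positive semi-definite and, simultaneously, $r$ and a suitably defined new entry are consistent with $M_x(\hat\Omega_3)=\left(\begin{smallmatrix} a & ac & be\\ ac & acp & beq\\ be & beq & bdr\end{smallmatrix}\right)\ge 0$ for an appropriate $p$. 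The key point is that $M(\Omega_1)\ge0$ provides exactly the slack needed: the $(1,1)$ entry of $M_y$ beyond the rank-one part, namely $beq-\frac{(be)^2}{b}=be(q-e)$, can be made nonnegative by taking $q\ge e$, and one then uses the flat-extension equations $B=M(\Omega_1)W$ to solve for the remaining quartic moments $\gamma_{04},\gamma_{13},\gamma_{22},\gamma_{31},\gamma_{40}$ in terms of $p,q,r,s$ and the data; hankelicity of the resulting $C=W^*M(\Omega_1)W$ block is then a system of polynomial constraints on $p,q,r,s$ which one shows is solvable (with the solution living in the region that also keeps $M_x,M_y\ge0$). I expect this simultaneous solvability — exhibiting one choice of $(p,q,r,s)$ making $C$ Hankel \emph{and} both localizing matrices positive — to be the crux; the natural approach is to parametrize by, say, $q$ and $r$, derive the forced values of $p$ and $s$ from hankelicity, and then verify that for $q,r$ near the "diagonal" values dictated by a $2$-atomic ansatz the positivity inequalities (which become equalities in the rank-$2$ degenerate limit) hold by continuity/strictness of $M(\Omega_1)>0$.

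Finally, once $\hat\Omega_3$ is in hand, Theorem \ref{main} delivers a $\rank M(\Omega_1)$-atomic representing measure $\mu$ supported in $\mathbb{R}_+^2$, and the count of atoms on the axes is $\rank M(\Omega_1)-\rank M_x(\hat\Omega_3)$ and $\rank M(\Omega_1)-\rank M_y(\hat\Omega_3)$. To conclude that $\mu$ is the Berger measure of an honest subnormal completion, I would check that at least one atom lies in $(0,+\infty)^2$: since $\Omega_1$ has all weights positive, $\gamma_{11}=be>0$, so $\mu$ cannot be supported entirely on the two axes, giving the required interior atom. This produces $\mathbf T_{\hat\Omega_\infty}$, completing the proof.
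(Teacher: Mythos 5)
Your overall architecture matches the paper's: reduce to Theorem \ref{main}, split on $\operatorname{rank}M(\Omega_1)$, and construct the four new weights $p,q,r,s$. Your closing observation that $\gamma_{11}=be>0$ forces an atom in the open quadrant is correct and is a cleaner justification of the last clause than the paper makes explicit. However, the decisive step of the theorem is precisely the one you defer: exhibiting concrete $p,q,r,s$ for which (a) $M_x(\hat\Omega_3)\geq 0$, (b) $M_y(\hat\Omega_3)\geq 0$, and (c) the flat-extension block $C=B^{T}M(1)^{-1}B$ is Hankel, i.e. $C_{13}=C_{22}$. You describe this as ``a system of polynomial constraints \ldots which one shows is solvable'' and propose to ``derive the forced values of $p$ and $s$ from hankelicity,'' but hankelicity of a symmetric $3\times 3$ block is a \emph{single} scalar equation, so it cannot determine two unknowns; and the continuity-from-the-degenerate-limit heuristic is not a proof that the one equation and the several positivity inequalities have a common solution. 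The paper resolves this by very specific choices: $p=q=c$ (forced by propagation for hyponormal $2$-variable shifts once the zeroth row is completed as $\sqrt a,\sqrt c,\sqrt c,\dots$), then the extremal value $r=ef/d$ when $c>e$ (so that $M_x(2)\geq 0$ becomes an equality and $\hat\beta_{11}=\sqrt f$), then $s$ chosen as the unique root of $\det M_y(2)(s)=0$, with the explicit verification $s-d=e(ad-be)^{2}/(a^{2}d(c-e))\geq 0$; only after these choices does one compute $C_{13}=C_{22}=bcef$ and confirm the extension is a genuine moment matrix. Without some such explicit construction (or an actual solvability argument for the constraint system), the main case is not proved.

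Two smaller inaccuracies: your identified ``key point'' $be(q-e)\geq 0$ is not the binding constraint (in the paper's construction $q=c$ is fixed by propagation and the real constraints fall on $r$ and $s$, via the full $2\times 2$ Schur complement of $M_y$, not just its $(2,2)$ entry); and in the rank-one case the correct new weights are $p=q=r=a$, $s=b$ (all weights constant equal to $\sqrt a$ and $\sqrt b$), not $p=q=r=s=1$. Also, in the rank-two case the column relation $Y=\lambda 1+\mu X$ does not by itself force $X^{2}$; the paper additionally sets $X^{2}=cX$, justified by the one-variable completion of the zeroth row, and then still has to check $M(2)_{46}=M(2)_{55}$.
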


\begin{proof}
Since $m=1$, we have $\ell =1$. \ By Theorem \ref{main}, we first need to
show that six new weights, $\hat{\alpha}_{20},\hat{\beta}_{20},\hat{\alpha}%
_{11},\hat{\beta}_{11},\hat{\alpha}_{02}$ and $\hat{\beta}_{02}$ can be
chosen in such a way that $M_{x}(\hat{\Omega}_{3})\geq 0$ and $M_{y}(\hat{%
\Omega}_{3})\geq 0$. \ Once we prove this, we shall employ techniques from
truncated moment problems to establish the existence of a flat extension $M(%
\hat{\Omega}_{3})$ of $M(\Omega _{1})$. \ We will then appeal to the main
result in \cite{tcmp4}; the existence of a flat extension will readily imply
the existence of a representing measure $\mu $ for $M(1)$, and the
positivity of the localizing matrices $M_{x}(2)$ and $M_{y}(2)$ means that $%
\operatorname{supp}\;\mu \subseteq \mathbb{R}_{+}^{2}$. \ Thus, $\mu $ will be the
Berger measure of a subnormal $2$-variable weighted shift $\mathbf{T}%
_{\Omega _{\infty }}$, which will be the desired subnormal completion of $%
\Omega _{1}$. \ 

We now build $M(2)$. \ To simplify the calculations, we let 
\begin{equation*}
\left\{ 
\begin{array}{cc}
a:=\alpha _{00}^{2} & b:=\beta _{00}^{2} \\ 
c:=\alpha _{10}^{2} & d:=\beta _{01}^{2} \\ 
e:=\alpha _{01}^{2} & f:=\beta _{10}^{2}%
\end{array}%
\right. .
\end{equation*}%
(The family $\Omega _{1}$ is shown in Figure \ref{initial}.)

\setlength{\unitlength}{1mm} \psset{unit=1mm} 
\begin{figure}[th]
\begin{center}
\begin{picture}(50,45)

\psline(0,0)(40,0)
\psline(0,20)(20,20)
\psline(0,0)(0,40)
\psline(20,0)(20,20)

\put(8,2){\footnotesize{$\sqrt{a}$}}
\put(28,2){\footnotesize{$\sqrt{c}$}}

\put(8,22){\footnotesize{$\sqrt{e}$}}

\put(1,9){\footnotesize{$\sqrt{b}$}}
\put(1,29){\footnotesize{$\sqrt{d}$}}
\put(21,9){\footnotesize{$\sqrt{f}$}}

\end{picture}
\end{center}
\caption{The initial family of weights $\Omega _{1}$}
\label{initial}
\end{figure}
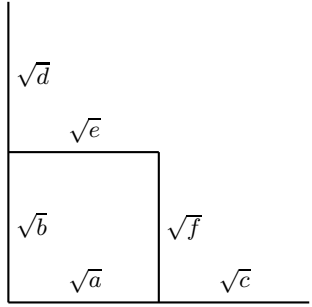

Thus, 
\begin{equation}
M(1)=\left( 
\begin{array}{ccc}
1 & a & b \\ 
a & ac & be \\ 
b & be & bd%
\end{array}%
\right) .  \label{m1}
\end{equation}%
Since $M(1)\geq 0$, it follows that $\det \;\left( 
\begin{array}{cc}
ac & be \\ 
be & bd%
\end{array}%
\right) \geq 0$, i.e., 
\begin{equation}
acd\geq be^{2}.  \label{weight1}
\end{equation}%
\ By the commutativity of $\Omega _{1}$, we have 
\begin{equation}
af=be,  \label{af}
\end{equation}%
and therefore 
\begin{equation}
cd\geq ef.  \label{ineq1}
\end{equation}%
\ A straightforward calculation shows that 
\begin{equation*}
\det \;M(1)=acbd-b^{2}e^{2}-a^{2}bd+2ab^{2}e-b^{2}ac
\end{equation*}%
and that 
\begin{equation}
\det \;M(1)>0\Longrightarrow cd-ef>0;  \label{cdef}
\end{equation}%
for, if $cd-ef=0$ then the rank of the $2\times 2$ lower right-hand corner
of $M(1)$ is $1$, and then $M(1)$ cannot be invertible. \ Inspection of (\ref%
{ineq1}) reveals that we must have $c\geq e$ or $d\geq f$. \ Without loss of
generality, we shall assume that $c\geq e$. \ We also assume that $a<c$,
since otherwise a trivial solution exists. \ (In fact, if $a=c$ in (\ref{m1}%
), the positivity of $M(1)$ implies that $a=e$ and $b=f\leq d$; when $b=d$
(resp. $b<d$), the point mass $\delta _{(a,b)}$ is the Berger measure of the
subnormal completion (resp. $(1-\frac{b}{d})\delta _{(a,0)}+\frac{b}{d}%
\delta _{(a,d)}$). \ Thus, in what follows we shall always assume $c\geq e$
and $a<c$.

To build $M(2)\equiv M(\hat{\Omega}_{3})$, we first need six new weights
(the quadratic weights), namely $\hat{\alpha}_{20}$, $\hat{\beta}_{20}$, $%
\hat{\alpha}_{11}$, $\hat{\beta}_{11}$, $\hat{\alpha}_{02}$ and $\hat{\beta}%
_{02}$. \ Since the extension $\hat{\Omega}_{3}$ will also be commutative,
two of these weights will be expressible in terms of other weights. \ We
thus denote $\hat{\alpha}_{20}$ by $\sqrt{p}$, $\hat{\alpha}_{11}$ by $\sqrt{%
q}$, $\hat{\alpha}_{02}$ by $\sqrt{r}$, and $\hat{\beta}_{02}$ by $\sqrt{s}$
($\hat{\beta}_{20}$ and $\hat{\beta}_{11}$ can be written in terms of the
other four new weights). \ It follows that 
\begin{equation}
M(2)=\left( 
\begin{array}{cccccc}
1 & a & b & ac & be & bd \\ 
a & ac & be & acp & beq & bdr \\ 
b & be & bd & beq & bdr & bds \\ 
ac & acp & beq &  &  &  \\ 
be & beq & bdr &  &  &  \\ 
bd & bdr & bds &  &  & 
\end{array}%
\right)  \label{incomplete}
\end{equation}%
(with the lower right-hand $3\times 3$ corner yet undetermined) and 
\begin{equation*}
M_{x}(2)=\left( 
\begin{array}{ccc}
a & ac & be \\ 
ac & acp & beq \\ 
be & beq & bdr%
\end{array}%
\right) \text{ and }M_{y}(2)=\left( 
\begin{array}{ccc}
b & be & bd \\ 
be & beq & bdr \\ 
bd & bdr & bds%
\end{array}%
\right) .
\end{equation*}%
Now, since the zero-th row of a subnormal completion of $\Omega _{1}$ will
be a subnormal completion of the zero-th row of $\Omega _{1}$, which is
given by the weights $a\leq c$, we let $p:=c$. \ By one of the main results
in \cite{propagation}, having $\alpha _{10}=\hat{\alpha}_{20}$ immediately
implies that $\hat{\alpha}_{11}=\sqrt{c}$, that is, $q:=c$. \ Thus, 
\begin{equation*}
M_{x}(2)=\left( 
\begin{array}{ccc}
a & ac & be \\ 
ac & ac^{2} & bce \\ 
be & bce & bdr%
\end{array}%
\right) .
\end{equation*}%
By Choleski's Algorithm \cite{Atk}, $M_{x}(2)\geq 0$ if and only if $bdr\geq 
\frac{(be)^{2}}{a}$, so that we need $r\geq \frac{ef}{d}$. \ Thus, provided
we take $r\geq \frac{ef}{d}$, the positivity of $M_{x}(2)$ is guaranteed. \
It remains to show that we can choose $s$ in such a way that $s\geq d$ and $%
M_{y}(2)\equiv M_{y}(2)(s)\geq 0$. We consider two cases.

\textbf{Case 1}: $e=c$. \ By (\ref{ineq1}) we have $d\geq f$, so we can take 
$r:=c$ and guarantee that $M_{x}(2)\geq 0$. \ We also let $s:=d$. \ We then
have 
\begin{equation*}
M_{y}(2)=\left( 
\begin{array}{ccc}
b & bc & bd \\ 
bc & bc^{2} & bcd \\ 
bd & bcd & bd^{2}%
\end{array}%
\right) .
\end{equation*}%
It follows at once that $\operatorname{rank}\;M_{y}(2)=1$, and therefore $%
M_{y}(2)\geq 0$ (and of course $s\geq d$).

\textbf{Case 2}: $e<c$. \ We define $r$ by this extremal value, i.e., $r:=%
\frac{ef}{d}$. \ This immediately implies that $\hat{\beta}_{11}:=\sqrt{f}$,
and by propagation, $\hat{\beta}_{1j}:=\sqrt{f}$ (all $j\geq 2$) in any
subnormal completion. \ The resulting weight diagram is shown in Figure \ref%
{weight}.

\setlength{\unitlength}{1mm} \psset{unit=1mm} 
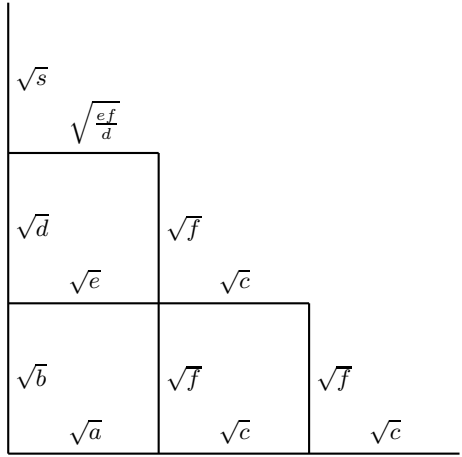
\begin{figure}[h]
\begin{center}
\begin{picture}(70,65)

\psline(0,0)(60,0)
\psline(0,20)(40,20)
\psline(0,40)(20,40)
\psline(0,0)(0,60)
\psline(20,0)(20,40)
\psline(40,0)(40,20)

\put(8,2){\footnotesize{$\sqrt{a}$}}
\put(28,2){\footnotesize{$\sqrt{c}$}}
\put(48,2){\footnotesize{$\sqrt{c}$}}

\put(8,22){\footnotesize{$\sqrt{e}$}}
\put(28,22){\footnotesize{$\sqrt{c}$}}

\put(8,43){\footnotesize{$\sqrt{\frac{ef}{d}}$}}

\put(1,9){\footnotesize{$\sqrt{b}$}}
\put(1,29){\footnotesize{$\sqrt{d}$}}
\put(1,49){\footnotesize{$\sqrt{s}$}}

\put(21,9){\footnotesize{$\sqrt{f}$}}
\put(21,29){\footnotesize{$\sqrt{f}$}}

\put(41,9){\footnotesize{$\sqrt{f}$}}

\end{picture}
\end{center}
\caption{The family $\Omega _{1}$ augmented with the inclusion of the
quadratic weights}
\label{weight}
\end{figure}

It remains to define $s$, in such a way that $s\geq d$ and $M_{y}(2)\geq 0$.
\ Since 
\begin{equation*}
M_{y}(2)\equiv M_{y}(2)(s)=\left( 
\begin{array}{ccc}
b & be & bd \\ 
be & bce & bef \\ 
bd & bef & bds%
\end{array}%
\right)
\end{equation*}%
and the $2\times 2$ upper left-hand corner of $M_{y}(2)$ is invertible, we
see that $M_{y}(2)\geq 0$ if and only if $\det \;M_{y}(2)(s)\geq 0$. \ Since 
$\det \;M_{y}(2)(s)$ is linear in $s$, we pick for $s$ the unique value that
makes $\det \;M_{y}(2)(s)=0$. \ A straightforward calculation shows that 
\begin{equation*}
s=\frac{a^{2}cd^{2}-2abde^{2}+b^{2}e^{3}}{a^{2}d(c-e)}.\ 
\end{equation*}%
We then have 
\begin{equation*}
s-d=\frac{e(ad-be)^{2}}{a^{2}d(c-e)}\geq 0.
\end{equation*}%
Thus, this particular choice of $s$ guarantees both $s\geq d$ and $%
M_{y}(2)\geq 0$.

To complete the proof, we need to define the $3\times 3$ lower right-hand
corner of $M(2)$, and then show that $M(2)$ is a flat extension of $M(1)$,
and therefore $M(2)\geq 0$. \ We consider the following two cases.

(i) $\operatorname{rank}\;M(1)=2$: Without loss of generality, we may assume that $%
a<c$, so that the columns $1$ and $X$ of $M(1)$ are linearly independent. \
The column $Y$ must then be a linear combination of $1$ and $X$, and that
allows us to define $YX$ and $Y^{2}$ in $M(2)$. \ Moreover, since the
zero-th row of $\mathbf{T}_{\hat{\Omega}_{\infty }}$ is given by the weights 
$\sqrt{a},\sqrt{c},\sqrt{c},\cdots $, whose Berger measure is $\xi _{x}=(1-%
\frac{a}{c})\delta _{0}+\frac{a}{c}\delta _{c}$ (and thus supported in the
two-point set $\{0,c\}$), it is natural to let $X^{2}:=cX$ in the column
space of $M(2)$. \ With these definitions, one easily verifies that the
truncations to the first three rows of $X^{2}$, $YX$ and $Y^{2}$ agree with
the $3\times 3$ upper right-hand corner of the matrix $M(2)$ in (\ref%
{incomplete}). \ It is clear that the matrix $M(2)$ thus defined is positive
semi-definite, but one needs to verify that $M(2)$ is a moment matrix. \
This amounts to checking that the $(4,6)$ and $(5,5)$ entries are equal. \
Now, a straightforward calculation shows that in the column space of $M(1)$
we have 
\begin{equation}
Y=\frac{b(c-e)}{c-a}\cdot 1+\frac{f-b}{c-a}X,  \label{y}
\end{equation}%
so that 
\begin{eqnarray*}
M(2)_{46} &=&\left\langle Y^{2},X^{2}\right\rangle =\left\langle
Y^{2},cX\right\rangle \\
&=&c\left\langle Y,YX\right\rangle =c\left\langle \frac{b(c-e)}{c-a}\cdot 1+%
\frac{f-b}{c-a}X,YX\right\rangle \\
&=&c\frac{b(c-e)}{c-a}be+c\frac{f-b}{c-a}bce \\
&=&bce\frac{cf-be}{c-a}=bcef.
\end{eqnarray*}%
On the other hand, using (\ref{y}) we define $YX:=\frac{b(c-e)}{c-a}X+\frac{%
f-b}{c-a}X^{2}$, so that 
\begin{eqnarray*}
M(2)_{55} &=&\left\langle YX,YX\right\rangle \\
&=&\left\langle \frac{b(c-e)}{c-a}X+\frac{f-b}{c-a}X^{2},YX\right\rangle \\
&=&\frac{b(c-e)}{c-a}bce+\frac{f-b}{c-a}\left\langle cX,YX\right\rangle \\
&=&\frac{b(c-e)}{c-a}bce+c\frac{f-b}{c-a}bce \\
&=&bce\frac{cf-be}{c-a}=bcef.
\end{eqnarray*}%
It follows that $M(2)_{46}=M(2)_{55}$, as desired. \ In this case, the
representing measure is supported in the two-point set $%
\{(0,y_{0}),(c,y_{c})\}$, where 
\begin{equation}
y_{0}:=\frac{b(c-e)}{c-a}  \label{y0}
\end{equation}%
and 
\begin{equation}
y_{c}:=\frac{b(c-e)}{c-a}+\frac{f-b}{c-a}c=\frac{cf-be}{c-a}=f.  \label{yc}
\end{equation}

(iii) $\operatorname{rank}\;M(1)=3$: We let $B$ denote the upper right-hand corner
of $M(2)$, that is, 
\begin{equation*}
B:=\left( 
\begin{array}{ccc}
ac & be & bd \\ 
acp & beq & bdr \\ 
beq & bdr & bds%
\end{array}%
\right) =\left( 
\begin{array}{ccc}
ac & be & bd \\ 
ac^{2} & bce & bdr \\ 
bce & bdr & bds%
\end{array}%
\right) .
\end{equation*}%
We also let $C$ denote the lower right-hand corner of $M(2)$. \ Since we
want $\operatorname{rank}\;M(2)=\operatorname{rank}\;M(1)=3$, we must define $%
C:=B^{T}M(1)^{-1}B$. \ Again, we need to verify that $M(2)_{46}=M(2)_{55}$,
i.e., $C_{13}=C_{22}$. \ A straightforward calculation shows that 
\begin{equation*}
C_{13}=bcdr.
\end{equation*}%
When $c>e$, we have $r=\frac{ef}{d}$, and another calculation shows that 
\begin{equation*}
C_{22}=\frac{b^{2}ce^{2}}{a};
\end{equation*}%
it is then immediate that $C_{13}=C_{22}$. \ When $c=e$, we have $r=c$, and
in this case $C_{13}=C_{22}=bc^{2}d$, as desired.

The proof of the Theorem is now complete.
\end{proof}

\section{\label{concrete}Description of the Representing Measure}

In this section we provide a concrete description of the Berger measure for
the subnormal completion in Theorem \ref{quartic}. \ We have already
observed that when $\operatorname{rank}\;M(1)=1$, the representing measure is $\mu
=\delta _{(a,b)}$. \ When $\operatorname{rank}\;M(1)=2$ (and the columns $1$ and $X$
linearly independent), there is a $2$-atomic representing measure, with
atoms $(0,y_{0})$ and $(c,y_{c})$ given by (\ref{y0}) and (\ref{yc}); thus, $%
\mu =\rho _{(0,y_{0})}\delta _{(0,y_{0})}+\rho _{(c,y_{c})}\delta
_{(c,y_{c})}$. \ To find the densities $\rho _{(0,y_{0})}$ and $\rho
_{(c,y_{c})}$, we use the first two moments: $\int d\mu =\rho
_{(0,y_{0})}+\rho _{(c,y_{c})}=1$ and $\int s\;d\mu =c\rho _{(c,y_{c})}=a$.
\ It follows that the densities are $\rho _{(0,y_{0})}=1-\frac{a}{c}$ and $%
\rho _{(c,y_{c})}=\frac{a}{c}$. \ Thus, $\mu =(1-\frac{a}{c})\delta
_{(0,y_{0})}+\frac{a}{c}\delta _{(c,y_{c})}$.

We now focus on the case $\operatorname{rank}\;M(1)=3$. \ Since $M(1)$ is
invertible, the last three columns of the flat extension $M(2)$ can be
written in terms of the first three columns; that is, the columns labeled $%
X^{2}$, $YX$ and $Y^{2}$ are linear combinations of $1$, $X$ and $Y$. \ Each
of these column relations is associated with a quadratic polynomial in $x$
and $y$, whose zero sets give rise to the so-called \textit{algebraic variety%
} of $\hat{\Omega}_{3}$ \cite{tcmp7}; concretely, $\mathcal{V}(\hat{\Omega}%
_{3}):=\bigcap_{p(X,Y)=0\text{, }\deg \;p\leq 2}\mathcal{Z}(p)$, where $%
\mathcal{Z}(p)$ denotes the zero set of $p$. \ In our case, the three column
relations are 
\begin{eqnarray*}
X^{2} &=&cX \\
YX &=&fX \\
Y^{2} &=&\frac{be(f-d)}{a(c-e)}X+\frac{cd-ef}{c-e}Y.
\end{eqnarray*}%
The associated zero sets are 
\begin{eqnarray*}
\{(x,y) &:&x=0\text{ or }x=c\} \\
\{(x,y) &:&x=0\text{ or }y=f\} \\
\{(x,y) &:&y^{2}=\frac{be(f-d)}{a(c-e)}x+\frac{cd-ef}{c-e}y\}.
\end{eqnarray*}%
Let $z:=\frac{cd-ef}{c-e}$ and observe that $z>0$ by (\ref{cdef}). $\ $The
algebraic variety of $\hat{\Omega}_{3}$ is then $\mathcal{V}(\hat{\Omega}%
_{3})=\{(0,0),(0,z),(c,f)\}$ and these are the three atoms of the unique
representing measure for $M(2)$. \ To find the densities, we use the first
three moments, $\gamma _{00}$, $\gamma _{01}$ and $\gamma _{10}$: 
\begin{equation*}
\left\{ 
\begin{array}{ccc}
\rho _{(0,0)}+\rho _{(0,z)}+\rho _{(c,f)} & = & 1 \\ 
\rho _{(c,f)}c & = & a \\ 
\rho _{(0,z)}z+\rho _{(c,f)}f & = & b.%
\end{array}%
\right.
\end{equation*}%
We obtain 
\begin{eqnarray*}
\rho _{(c,f)} &=&\frac{a}{c} \\
\rho _{(0,z)} &=&\frac{1}{z}(b-\frac{a}{c}f)=\frac{b(c-e)^{2}}{c(cd-ef)} \\
\rho _{(0,0)} &=&1-\rho _{(0,z)}-\rho _{(c,f)}=\frac{1}{ab(cd-ef)}\det
\;M(1).
\end{eqnarray*}%
Thus, the representing measure is 
\begin{equation}
\mu =\frac{\det \;M(1)}{ab(cd-ef)}\delta _{(0,0)}+\frac{b(c-e)^{2}}{c(cd-ef)}%
\delta _{(0,z)}+\frac{a}{c}\delta _{(c,f)}.  \label{measure}
\end{equation}%
Direct calculation shows that $\int s^{2}\;d\mu (s,t)=\frac{a}{c}c^{2}=ac$, $%
\int st\;d\mu (s,t)=\frac{a}{c}cf=af=be$, and 
\begin{eqnarray*}
\int t^{2}\;d\mu (s,t) &=&\frac{b(c-e)^{2}}{c(cd-ef)}z^{2}+\frac{a}{c}f^{2}=%
\frac{b(c-e)^{2}}{c(cd-ef)}(\frac{cd-ef}{c-e})^{2}+\frac{bef}{c} \\
&=&\frac{b(cd-ef)}{c}+\frac{bef}{c}=bd,
\end{eqnarray*}%
so that $\mu $ correctly interpolates $\Omega _{1}$. \ 

Recall now that the marginal measures $\nu ^{X}$ and $\nu ^{Y}$ associated
to a Borel measure $\nu $ on the Cartesian product $X\times Y$ are given by $%
\nu ^{X}(E):=\nu (E\times Y)$ and $\nu ^{Y}(F):=\nu (X\times F)$, for $E$
and $F$ Borel sets. \ In the specific case of the measure $\mu $ in (\ref%
{measure}), observe that the marginal measures $\mu ^{X}$ and $\mu ^{Y}$ are 
$(1-\frac{a}{c})\delta _{0}+\frac{a}{c}\delta _{c}$ and $\frac{\det \;M(1)}{%
ab(cd-ef)}\delta _{0}+\frac{b(c-e)^{2}}{c(cd-ef)}\delta _{z}+\frac{a}{c}%
\delta _{f}$, respectively. \ While $\mu ^{X}$ is always $2$-atomic, $\mu
^{Y}$ is $3$-atomic if and only if $z\neq f$. \ When $\mu ^{Y}$ is $3$%
-atomic, its moments (which are also the moments of an associated unilateral
weighted shift $W_{\eta }$) satisfy the recursive relation $\gamma
_{n+2}=-fz\gamma _{n}+(f+z)\gamma _{n+1}$ (all $n\geq 1)$, with $\gamma
_{0}=1$, $\gamma _{1}=b$ and $\gamma _{2}=bd$. \ It is easy to see that the
restriction of $W_{\eta }$ to the invariant subspace $M_{1}$ defined in (\ref%
{mh}) has Berger measure $\frac{1}{\gamma _{1}}t\;d\mu ^{Y}(t)=\frac{\rho
_{(0,z)}z}{b}\delta _{z}(t)+\frac{\rho _{(c,f)}f}{b}\delta _{f}(t)$, whose
recursive coefficients are $-zf$ and $z+f$, respectively.

On the other hand, it is indeed possible to have $z=f$, which occurs
precisely when $d=f$. \ In that case, the three atoms of $\mu $ are $(0,0)$, 
$(0,f)$ and $(c,f)$, and the unilateral weighted shift associated with $\mu
^{Y}$ is $W_{(b,d,d,\cdots )}$. The reader will note that the location of
these atoms can also be predicted by Theorem \ref{main}, once we observe
that $\operatorname{rank}\;M_{x}(2)=1$ and $\operatorname{rank}\;M_{y}(2)=2$.

\section{\label{flat}Flat Extensions May Not Exist}

We now present an example of a set $\Omega _{3}$ for which the associated
moment matrix $M(2)$ admits a representing measure, but such that $M(2)$ has no 
flat extension $M(3)$. \ Thus, while Theorem \ref{main} provides a general sufficient
condition for solving SCP, not all SCP will fit that framework, and their
associated moment matrices $M(\Omega _{\tilde{m}})$ will require a sequence
of moment matrix extensions $M(\Omega _{\tilde{m}+2})$, $\cdots $ , $%
M(\Omega _{\tilde{m}+2k})$, with $M(\Omega _{\tilde{m}+2k})$ admitting a
flat extension $M(\Omega _{\tilde{m}+2(k+1)})$.

The example is motivated by the construction in \cite[Examples 1.13 and 5.6]%
{tcmp6}, and also by \cite[Proposition 1.12]{tcmp6}, which states that a TMP
and its image under a degree-one transformation of the base space are
equivalent as moment problems. \ In particular, the qualitative aspects of
TMP are preserved under degree-one transformations; our idea is therefore to
``translate'' \cite[Example 1.13]{tcmp6} three units to the right and four
units up, so that the support of the $6$-atomic representing measure in %
\cite[Example 1.13]{tcmp6} will land in the positive quadrant. \ (We note
that to produce a valid representing measure for SCP, it suffices to have
all atoms in the nonnegative quadrant, and at least one atom in the positive
quadrant.) \ To effectuate the above mentioned translation, we recall the
definition of the Riesz functional $L_{\gamma }$ associated to a TMP. \ The
linear functional $L_{\gamma }$ acts on polynomials by $L(y^{i}x^{j}):=%
\gamma _{ij}$. \ Given the moments $\gamma _{ij}$, one can translate the TMP
by $h$ units in the horizontal direction and $k$ units in the vertical
direction by letting $\tilde{\gamma}_{ij}\equiv $ $\tilde{L}%
(v^{i}u^{j}):=L_{\gamma }((v+4)^{i}(u+3)^{j})$. \ The associated moments of
degree $4$ are:%
\begin{equation*}
\begin{array}{ccccccccccc}
\gamma _{00}=1 &  &  &  &  &  & \tilde{\gamma}_{00}=1 &  &  &  &  \\ 
\gamma _{01}=1 & \gamma _{10}=1 &  &  &  &  & \tilde{\gamma}_{01}=4 & \tilde{%
\gamma}_{10}=5 &  &  &  \\ 
\gamma _{02}=2 & \gamma _{11}=0 & \gamma _{20}=3 &  &  &  & \tilde{\gamma}%
_{02}=17 & \tilde{\gamma}_{11}=19 & \tilde{\gamma}_{20}=27 &  &  \\ 
\gamma _{03}=4 & \gamma _{12}=0 & \gamma _{21}=0 & \gamma _{30}=9 &  &  & 
\tilde{\gamma}_{03}=76 & \tilde{\gamma}_{12}=77 & \tilde{\gamma}_{21}=97 & 
\tilde{\gamma}_{30}=157 &  \\ 
\gamma _{04}=9 & \gamma _{13}=0 & \gamma _{22}=0 & \gamma _{31}=0 & \gamma
_{40}=28 &  & \tilde{\gamma}_{04}=354 & \tilde{\gamma}_{13}=331 & \tilde{%
\gamma}_{22}=371 & \tilde{\gamma}_{31}=535 & \tilde{\gamma}_{40}=972.%
\end{array}%
\end{equation*}%
For example, 
\begin{eqnarray*}
\tilde{\gamma}_{21} &=&L_{\gamma }((v+4)^{2}(u+3))=L_{\gamma
}((v^{2}+8v+16)(u+3)) \\
&=&L_{\gamma }(v^{2}u+8vu+3v^{2}+16u+24v+48)=\gamma _{21}+8\gamma
_{11}+3\gamma _{20}+16\gamma _{01}+24\gamma _{10}+48\gamma _{00} \\
&=&0+8\cdot 0+3\cdot 3+16\cdot 1+24\cdot 1+48\cdot 1=97.
\end{eqnarray*}

With the new moments at hand, we form the matrix $M(2)$. \ The corresponding
weights are: 
\begin{equation}
\begin{array}[t]{llll}
\alpha _{03}=\frac{\sqrt{535}}{\sqrt{157}} &  &  &  \\ 
\alpha _{02}=\frac{\sqrt{97}}{3\sqrt{3}} & \alpha _{12}=\frac{\sqrt{371}}{%
\sqrt{97}} &  &  \\ 
\alpha _{01}=\frac{\sqrt{19}}{\sqrt{5}} & \alpha _{11}=\frac{\sqrt{77}}{%
\sqrt{19}} & \alpha _{21}=\frac{\sqrt{331}}{\sqrt{77}} &  \\ 
\alpha _{00}=2 & \alpha _{10}=\frac{\sqrt{17}}{2} & \alpha _{20}=\frac{2%
\sqrt{19}}{\sqrt{17}} & \alpha _{30}=\frac{\sqrt{17}}{\sqrt{38}}%
\end{array}
\label{omega3}
\end{equation}%
\begin{equation*}
\begin{array}[t]{llll}
\beta _{03}=\frac{18\sqrt{3}}{\sqrt{157}} &  &  &  \\ 
\beta _{02}=\frac{\sqrt{157}}{3\sqrt{3}} & \beta _{12}=\frac{\sqrt{535}}{%
\sqrt{97}} &  &  \\ 
\beta _{01}=\frac{3\sqrt{3}}{\sqrt{5}} & \beta _{11}=\frac{\sqrt{97}}{\sqrt{%
19}} & \beta _{21}=\frac{\sqrt{371}}{\sqrt{97}} &  \\ 
\beta _{00}=\sqrt{5} & \beta _{10}=\frac{\sqrt{19}}{2} & \beta _{20}=\frac{%
\sqrt{77}}{\sqrt{17}} & \beta _{30}=\frac{\sqrt{331}}{2\sqrt{19}}.%
\end{array}%
\end{equation*}

\begin{example}
\label{noflat}Let $\Omega _{3}$ be given by (\ref{omega3}) and let $%
M(2)\equiv M(2)(\Omega _{3})$ the its associated moment matrix, with entries
built from the data $\tilde{\gamma}_{ij}$. \ Let $M(3)$ be a positive
semi-definite, recursively generated, moment matrix extension of $M(2)$. \
Then $\operatorname{rank}\;M(3)>\operatorname{rank}\;M(2)$. \ As a consequence, $M(2)$
admits no flat extension $M(3)$. \ For, consider a moment matrix extension 
\begin{equation*}
M(3):=\left( 
\begin{array}{cccccccccc}
1 & 4 & 5 & 17 & 19 & 27 & 76 & 77 & 97 & 157 \\ 
4 & 17 & 19 & 76 & 77 & 97 & 354 & 331 & 371 & 535 \\ 
5 & 19 & 27 & 77 & 97 & 157 & 331 & 371 & 535 & 972 \\ 
17 & 76 & 77 & 354 & 331 & 371 & \tilde{\gamma}_{05} & \tilde{\gamma}_{14} & 
\tilde{\gamma}_{23} & \tilde{\gamma}_{32} \\ 
19 & 77 & 97 & 331 & 371 & 535 & \tilde{\gamma}_{14} & \tilde{\gamma}_{23} & 
\tilde{\gamma}_{32} & \tilde{\gamma}_{41} \\ 
27 & 97 & 157 & 371 & 535 & 972 & \tilde{\gamma}_{23} & \tilde{\gamma}_{32}
& \tilde{\gamma}_{41} & \tilde{\gamma}_{50} \\ 
76 & 354 & 331 & \tilde{\gamma}_{05} & \tilde{\gamma}_{14} & \tilde{\gamma}%
_{23} & \tilde{\gamma}_{06} & \tilde{\gamma}_{15} & \tilde{\gamma}_{24} & 
\tilde{\gamma}_{33} \\ 
77 & 331 & 371 & \tilde{\gamma}_{14} & \tilde{\gamma}_{23} & \tilde{\gamma}%
_{32} & \tilde{\gamma}_{15} & \tilde{\gamma}_{24} & \tilde{\gamma}_{33} & 
\tilde{\gamma}_{42} \\ 
97 & 371 & 535 & \tilde{\gamma}_{23} & \tilde{\gamma}_{32} & \tilde{\gamma}%
_{41} & \tilde{\gamma}_{24} & \tilde{\gamma}_{33} & \tilde{\gamma}_{42} & 
\tilde{\gamma}_{51} \\ 
157 & 535 & 972 & \tilde{\gamma}_{32} & \tilde{\gamma}_{41} & \tilde{\gamma}%
_{50} & \tilde{\gamma}_{33} & \tilde{\gamma}_{42} & \tilde{\gamma}_{51} & 
\tilde{\gamma}_{06}%
\end{array}%
\right) ,
\end{equation*}%
where the moments of degree $5$ and $6$ are new. \ A direct computation
shows that $\operatorname{rank}\;M(2)=5$, and that $(X-3)(Y-4)=0$, that is, $%
YX=4X+3Y-12$. \ In any positive semi-definite, recursively generated,
extension $M(3)$ this column relation would still be valid, and it would
also give rise to two new column relations, namely $YX^{2}=4X^{2}+3YX-12X$
and $Y^{2}X=4YX+3Y^{2}-12Y$. \ These three identities lead at once to the
values $\tilde{\gamma}_{14}=1497$, $\tilde{\gamma}_{23}=1513$, $\tilde{\gamma%
}_{32}=1925$, $\tilde{\gamma}_{41}=3172$, $\tilde{\gamma}_{15}=243+4\tilde{%
\gamma}_{05}$, $\tilde{\gamma}_{24}=6555$, $\tilde{\gamma}_{33}=7375$, $%
\tilde{\gamma}_{42}=10796$, and $\tilde{\gamma}_{51}=1024+3\tilde{\gamma}%
_{50}$. \ Now, since the compression of $M(2)$ to the rows and columns
indexed by $1$, $X$, $Y$, $X^{2}$ and $Y^{2}$ is invertible, we can find
coefficients $A_{1}$, $A_{X}$, $A_{Y}$, $A_{X^{2}}$ and $A_{Y^{2}}$ such
that 
\begin{equation}
A_{1}[1]_{\mathcal{B}}+A_{X}[X]_{\mathcal{B}}+A_{Y}[Y]_{\mathcal{B}%
}+A_{X^{2}}[X^{2}]_{\mathcal{B}}+A_{Y^{2}}[Y^{2}]_{\mathcal{B}}=[X^{3}]_{%
\mathcal{B}}\text{,}  \label{relation}
\end{equation}%
where $[\cdot ]_{\mathcal{B}}$ denotes the compression of a column to $%
\mathcal{B}:=\{1,X,Y,X^{2},Y^{2}\}$. \ A calculation using \textit{%
Mathematica} \cite{Wol} reveals that $A_{1}=-25513+15\tilde{\gamma}_{05}$, $%
A_{X}=13587-8\tilde{\gamma}_{05}$, $A_{Y}=1$, $A_{X^{2}}=-1692+\tilde{\gamma}%
_{05}$ and $A_{Y^{2}}=0$. \ If $M(3)$ were a flat extension of $M(2)$, an
identity similar to (\ref{relation}) should hold for the last row in $M(3)$,
that is, 
\begin{equation}
A_{1}[1]_{\{X^{3}\}}+A_{X}[X]_{\{X^{3}\}}+A_{Y}[Y]_{\{X^{3}%
\}}+A_{X^{2}}[X^{2}]_{\{X^{3}\}}+A_{Y^{2}}[Y^{2}]_{\{X^{3}\}}=[X^{3}]_{%
\{X^{3}\}}.  \label{relation2}
\end{equation}%
Using \textit{Mathematica} again, it is easy to check that $%
A_{1}[1]_{\{X^{3}\}}+A_{X}[X]_{\{X^{3}\}}+A_{Y}[Y]_{\{X^{3}%
\}}+A_{X^{2}}[X^{2}]_{\{X^{3}\}}+A_{Y^{2}}[Y^{2}]_{\{X^{3}\}}=7376$, while $%
[X^{3}]_{\{X^{3}\}}=\tilde{\gamma}_{33}=7375$. \ It follows that $M(3)$
cannot be a flat extension of $M(2)$.
\end{example}

\begin{remark}
The SCP in Example \ref{noflat} does admit a solution, and the subnormal
completion has a $6$-atomic Berger measure. \ We see this after we observe
that the positive semi-definite moment matrix extension $M(3)$, while not a
flat extension of $M(2)$, does admit a flat extension $M(4)$. \ Rather than
showing the details here, we refer the reader to \cite[Proposition 5.5 and
Example 5.6]{tcmp6}; the representing measure constructed there must be
translated three units to the right and four units up to give rise to the
Berger measure that solves SCP in Example \ref{noflat}. \ 
\end{remark}

\textit{Acknowledgments.} \ The authors are deeply indebted to the referee
for a number of comments and observations that helped improve the content
and presentation of this paper. \ The examples, and portions of the proof of
Theorem \ref{quartic} were obtained using calculations with the software
tool \textit{Mathematica \cite{Wol}}.

\end{document}